\DeclareSymbolFont{bbold}{U}{bbold}{m}{n}
\DeclareSymbolFontAlphabet{\mathbbold}{bbold}
\def\qmod#1#2{{\hbox{}^{\displaystyle{#1}}}\!\big/\!\hbox{}_{
\displaystyle{#2}}}
\def\resto#1#2{{
#1\hskip 0.4ex\vline_{\hskip 0.2ex\raisebox{-0,2ex}
{{${\scriptstyle #2}$}}}}}
\def\C{{\mathbb C}}
\def\N{{\mathbb N}}
\def\P{{\mathbb P}}
\def\Q{{\mathbb Q}}
\def\R{{\mathbb R}}
\def\Z{{\mathbb Z}}
\def\union{\mathop{\bigcup}}
\def\map{\longrightarrow}
\def\textmap#1{\mathop{\vbox{\ialign{
                                  ##\crcr
      ${\scriptstyle\hfil\;\;#1\;\;\hfil}$\crcr
      \noalign{\kern 1pt\nointerlineskip}
      \rightarrowfill\crcr}}\;}}
\def\bigtextmap#1{\mathop{\vbox{\ialign{
                                  ##\crcr
      ${\hfil\;\;#1\;\;\hfil}$\crcr
      \noalign{\kern 1pt\nointerlineskip}
      \rightarrowfill\crcr}}\;}}
\newcommand{\cal}{\mathcal}
\def\textlmap#1{\mathop{\vbox{\ialign{
                                  ##\crcr
      ${\scriptstyle\hfil\;\;#1\;\;\hfil}$\crcr
      \noalign{\kern-1pt\nointerlineskip}
      \leftarrowfill\crcr}}\;}}
\def\dg{{\mathfrak d}}
\def\kg{{\mathfrak k}}
\def\qg{{\mathfrak q}}
\def\Bg{{\mathfrak B}}
\def\Ig{{\mathfrak I}}
\def\Jg{{\mathfrak J}}
\def\Sg{{\mathfrak S}}
\theoremstyle{remark}
\newtheorem{ex}{Example}[section]
\newtheorem{pb}{Problem}
\theoremstyle{plain}
\newtheorem{sz}{Satz}[section]
\newtheorem{thry}[sz]{Theorem}
\newtheorem{pr}[sz]{Proposition}
\newtheorem{co}[sz]{Corollary}
\newtheorem{dt}[sz]{Definition}
\newtheorem{re}[sz]{Remark}
\def\GL{\mathrm {GL}}
\def\Pic{\mathrm {Pic}}
\def\deg{\mathrm {deg}}
\def\st{\mathrm {st}}
\def\pst{\mathrm{pst}}
\def\ASD{\mathrm{ASD}}
\newcommand\smvee{{\hskip -0.3ex \raise 0.2ex\hbox{$\scriptscriptstyle\vee$}}}
\begin{document}
 
\title[Analytic cycles in flip passages and instanton moduli spaces]{Analytic cycles in flip passages and in instanton moduli spaces over non-Kählerian surfaces}
 \author{Andrei Teleman} 
 \thanks{The author has been partially supported by the ANR project MNGNK, decision 
Nr.  ANR-10-BLAN-0118}
\address{Aix Marseille Université, CNRS, Centrale Marseille, I2M, UMR 7373, 13453 Marseille, France }
\begin{abstract}

Let ${\cal M}^\st$ (${\cal M}^\pst$) be a moduli space of stable (polystable) bundles with fixed determinant on a complex surface  with $b_1=1$, $p_g=0$, and let $Z\subset {\cal M}^\st$ be a pure $k$-dimensional analytic set. We prove a general formula for the homological boundary $\delta[Z]^{BM}\in H_{2k-1}^{BM}(\partial\hat {\cal M}^\pst,\Z)$ of the Borel-Moore fundamental class of $Z$  in the boundary of the blow up moduli space   $\hat {\cal M}^\pst$. The proof is based on the holomorphic model theorem of \cite{Te5}, which identifies a neighborhood of a boundary component of $\hat {\cal M}^\pst$ with a neighborhood of the boundary of a ``blow up flip passage".

We then focus on a  particular instanton moduli space which intervenes  in our  program for proving the existence of curves on class VII surfaces. Using our result, combined  with general properties of the Donaldson cohomology classes, we prove incidence relations between the Zariski closures  (in the considered moduli space) of certain families of extensions.  These incidence relations are crucial for understanding the geometry of the moduli space, and cannot be obtained using classical complex geometric deformation theory.
\end{abstract}
\maketitle 
\section{Introduction} \label{intro}

Let $(X,g)$ be a Gauduchon surface \cite{Gau}, $(E,h)$ a Hermitian rank-2 bundle over $X$, ${\cal D}$ a holomorphic structure on the determinant line bundle $\det(E)$ and $a$ the Chern connection of the pair $({\cal D},\det(h))$. The moduli space ${\cal M}^\pst$ (${\cal M}^\st$) of polystable (stable) holomorphic structures ${\cal E}$ on $E$ with $\det({\cal E})={\cal D}$ can be identified with the instanton moduli space ${\cal M}^\ASD$ (respectively ${{\cal M}^\ASD}^*$) of (irreducible) projectively ASD unitary connections $A$ on $E$ with  $\det(A)=a$ \cite{DK},  \cite{Bu}, \cite{LT}, \cite{Te1}, \cite{Te3}, \cite{Te5}. The stable part ${\cal M}^\st \subset {\cal M}^\pst$ is open and has a natural complex space structure, which, in general, does not extend across the reduction locus ${\cal R}:={\cal M}^\pst\setminus {\cal M}^\st$ (the subspace of reducible instantons).

The set of topological decompositions of $E$ (as direct sum of line bundles) can be identified with the set
$${\cal D}ec(E):=\qmod{\{c\in H^2(X,\Z)|\ c(c_1(E)-c)=c_2(E)\}}{\sim}\ ,
$$
where $\sim$ is the equivalence relation defined by the involution $c\mapsto c_1(E)-c$. We assume that $c_1(E)\not\in 2H^2(X,\Z)$, which implies that this involution has no fixed points. Moreover, under this assumption,  we showed \cite{Te3}, \cite{Te5} that, for a surface with $b_1(X)=1$, $p_g(X)=0$ (in particular for   a class VII surface \cite{BHPV}, \cite{Na}), ${\cal R}$  decomposes as a disjoint union of circles
\begin{equation}\label{RecDec}
{\cal R}=\union_{\lambda\in {\cal D}ec(E)} C_\lambda\ ,
\end{equation}
where $C_\lambda:=\big\{[{\cal E}]\in {\cal M}^\pst|\ {\cal E}\hbox{ has a direct summand ${\cal L}$ with } c_1({\cal L})\in \lambda\big\}$. Choosing a representative $c\in \lambda$, and putting $\dg:=\frac{1}{2}\deg_g({\cal D})$, $C_\lambda$ can be identified with the circle 
$$C_c:=\{[{\cal L}]\in\Pic^c(X)|\ \deg_g({\cal L})=\dg\}$$
(see section \ref{BlUpSection}). Blowing up  ${\cal M}^\pst$ at a circle $C_\lambda$ of {\it regular} reductions yields a proper map $p_\lambda:\hat {\cal M}^\pst_\lambda\to {\cal M}^\pst$, defined on a space $\hat {\cal M}^\pst_\lambda$ which has a natural   structure of a manifold with boundary around the exceptional locus ${\cal P}_\lambda:=p_\lambda^{-1}(C_\lambda)$  \cite{Te5}. This  exceptional locus is a fiber bundle over $C_\lambda$ with a complex projective space as fiber.    It's important to point out that,  in our framework, a moduli space ${\cal M}^\st$ contains distinguished {\it locally} closed complex subspaces ${\cal P}^c_\varepsilon$, which correspond to families of stable extensions, and are described in Remark \ref{RemIntro} below. For   $[{\cal L}]\in\Pic(X)$ define 
$$\P_{[{\cal L}]}:=\{[{\cal E}]\in {\cal M}^\st|\ {\cal E}\hbox{ is an extension of ${\cal K}\otimes {\cal L}^\vee$ by ${\cal L}$}\}\ ,
$$
and for a subset $A\subset\R$,  and a class $c\in H^2(X,\Z)$, put 
$$\Pic^c(X)_A:=\{[{\cal L}]\in\Pic^c(X)|\ \deg_g({\cal L})\in A\}\ .$$
 When $A$ is a singleton (an open interval), $\Pic^c(X)_A$ is a circle  (an annulus).  
\begin{re}\label{RemIntro}  Suppose that  $b_1(X)=1$, $p_g(X)=0$,   $c_1(E)\not\in 2H^2(X,\Z)$, let $\lambda\in {\cal D}ec(X)$ such that $C_\lambda$ is a circle of regular reductions, and choose $c\in\lambda$. Then
\begin{enumerate}[1.] 
\item There exists an open neighborhood ${\cal U}_\lambda$ of $C_\lambda$ in ${\cal M}^\pst$ such that ${\cal U}_\lambda\cap{\cal M}^\st$  is a smooth complex manifold of  dimension $4c_2(E)-c_1^2(E)$,
\item There exists $\varepsilon>0$ such that 
\begin{enumerate}[i)]
\item for any    $[{\cal L}]\in \Pic^{c}(X)_{(\dg-\varepsilon,\dg)}$ the subspace
$\P_{[{\cal L}]}$ is a complex submanifold of ${\cal U}_\lambda\cap {\cal M}^\st$. This submanifold can be identified with $\P(H^1({\cal L}^{\otimes 2}\otimes {\cal K}^\vee))$,  where
$$\dim(H^1({\cal L}^{\otimes 2}\otimes {\cal K}^\vee)))=r_c:=-\frac{1}{2} (2c -c_1(E))(2c -c_1(E)+c_1(X))\ .$$
\item The union $\union_{[{\cal L}]\in \Pic^c(X)_{(\dg-\varepsilon,\dg)}}  \P_{[{\cal L}]}$ is disjoint, defines a smooth submanifold ${\cal P}^c_\varepsilon$ of ${\cal U}_\lambda\cap {\cal M}^\st$, and the natural map ${\cal P}^c_\varepsilon\to \Pic^c(X)_{(\dg-\varepsilon,\dg)}$ is a  fiber bundle with fiber $\P^{r_c-1}_\C$.
\end{enumerate}
\end{enumerate}
\end{re}
The following remark highlights an advantage of the blow up moduli space: 
\begin{re} With the notations, suppose $r_c>0$. For sufficiently small $\varepsilon>0$
\begin{enumerate}[1.] 
\item The closure $\tilde {\cal P}^c_\varepsilon$ of ${\cal P}^c_\varepsilon$ in $\hat {\cal M}^\pst_\lambda$ is a $\P^{r_c-1}_\C$-bundle  over $\Pic^c(X)_{[\dg-\varepsilon,\dg]}$,
\item The closure $\bar {\cal P}^c_\varepsilon$ of ${\cal P}^c_\varepsilon$ in ${\cal M}^\pst$ is obtained from $\tilde {\cal P}^c_\varepsilon$ be collapsing to points the fibers over the circle $\Pic^c_{\{\dg\}}$. \end{enumerate}
 
 \end{re}
Note that, in general, is very hard to describe  the Zariski closure of ${\cal P}^c_\varepsilon$ in ${\cal M}^\st$, even when $X$ is a known  surface (for instance a   Kato surface). Understanding the Zariski closures of these families, and the incidence relations between  these closures is very important for  understanding the geometry of the moduli space, and plays an important role in our program to prove existence of curves on class VII surfaces. This is why we are interested in the general properties of the analytic subsets of ${\cal M}^\st$. The main result of this article concerns the following 
\begin{pb} Let $Z\subset {\cal M}^\st$ be a pure $k$-dimensional analytic set, and let $[Z]^{BM}\in H_{2k}^{BM}({\cal M}^\st,\Z)$ its fundamental class in Borel-Moore homology. Determine the homological boundary $\delta([Z]^{BM})\in H_{2k-1}({\cal P}_\lambda,\Z)$ of $[Z]^{BM}$. 
\end{pb}

Why is this problem relevant for understanding the geometry of an instanton  moduli spaces ${\cal M}^\pst$ and its families of extensions?  In order to explain this, recall first \cite{DK} that ${\cal M}^\st={{\cal M}^\ASD}^*$ is naturally embedded in the infinite dimensional moduli space ${\cal B}^*$ of irreducible unitary connections $A$ on $E$ with $\det(A)=a$. The space ${\cal B}^*$ is endowed with tautological cohomology classes \cite{DK}, which will be called Donaldson classes.
In general, a Donaldson class $\nu\in H^*({\cal B}^*,\Q)$ cannot be extended across a circle of reductions, but it does extend to the exceptional locus ${\cal P}_\lambda\subset \hat {\cal M}^\pst_\lambda$ associated with a   circle $C_\lambda$ of  regular reductions. This a second important advantage of the blow up moduli space. With this remark we have
\newtheorem*{th-identity}{Proposition \ref{identity}} 
\begin{th-identity} 
Suppose that  $b_1(X)=1$, $p_g(X)=0$,   $c_1(E)\not\in 2H^2(X,\Z)$, ${\cal M}^\pst$ is compact, and all reductions in ${\cal M}^\pst$ are regular. For any Donaldson cohomology class $\nu\in H^{k-1}({\cal B}^*,\Q)$ and  any $\xi\in H_k^{BM}({\cal M}^\st,\Q)$ we have
\begin{equation}\label{id-formula}\sum_{\lambda\in {\cal D}ec(E)} \langle \resto{\nu}{{\cal P}_\lambda},\delta_\lambda\xi\rangle =0\ ,
\end{equation}
where $\delta_\lambda\xi$ denotes the homological boundary of $\xi$ in $H_{k-1}({\cal P}_\lambda,\Q)$.
\end{th-identity}

The point is that the restrictions $\resto{\nu}{{\cal P}_\lambda}$ have been computed explicitly   \cite[Corollary 2.6]{Te2}. Therefore, assuming that Problem 1 is solved, formula (\ref{id-formula}) yields a strong obstruction to the existence of an analytic set $Z\subset {\cal M}^\st$ with prescribed topological behavior around the circles of reductions $C_\lambda$. 
Note that (in the relevant cases) the particular moduli space ${\cal M}^\pst$, used in our program to prove the existence of curves on  class VII surfaces, satisfies the assumptions of Proposition \ref{identity} \cite{Te3}.
\vspace{1mm}

The main result of this article gives a solution to Problem 1. The proof is based on the holomorphic model theorem  \cite{Te5}, which states that a neighborhood of the boundary ${\cal P}_C$ in ${\cal M}^\pst$ can be identified with a neighborhood of the boundary of a standard model, which we called a {\it blow up flip passage}, and whose construction we recall briefly below.

Let $B$ be a Riemann surface, $p':E'\to B$, $p'':E''\to B$ holomorphic Hermitian bundles of ranks $r'$, $r''$, and $f:B\to\R$ a smooth function which is a submersion at any vanishing point, and such that $C:=f^{-1}(0)$ is a circle.  The direct sum $E:=E'\oplus E''$ is endowed with the $\C^*$-action $\zeta\cdot (y',y''))= (\zeta y',\zeta^{-1}y'')$. The zero locus $Z(m^f)$ of the map  $m^f:E\to \R$ defined by
$$m_b^f(y',y'')=\frac{1}{2}(\| y'\|^2-\|y''\|^2)+f(b)\ , \ \forall (y',y'')\in E_b \ ,
$$
is a smooth,  $S^1$-invariant hypersurface. The induced $S^1$-action on  $Z(m^f)$ is free away of $C$ (embedded in $E$ via the zero section). Put $F':=\resto{E'}{C}$, $F'':=\resto{E''}{C}$. The normal bundle of $C$ in 
$Z(m^f)$ can be identified (as an $S^1$-bundle) with $F'\oplus \bar F''$, hence the spherical blow up $\widehat{Z(m^f)}_C$ of $Z(m^f)$ at $C$ is a manifold with boundary, whose boundary can be identified with the sphere bundle $S(F'\oplus \bar F'')$ (see section \ref{BlUpSection} in this article, \cite{Te5}). The {\it blow up flip passage} $\hat Q_f$ associated with the data $(p':E'\to B', p'':E''\to B,f)$ is defined by
$$\hat Q_f:=\qmod{\widehat{Z(m^f)}_C}{S^1}\ .
$$
This quotient is a smooth manifold whose boundary $\partial\hat Q_f$ can be identified with the projective bundle $\P(F'\oplus \bar F'')$. The interior $\hat Q_f\setminus \partial\hat Q_f$ can be identified with the quotient $Q^s_f:=E^s_f/\C^*$, where $E^s_f:=\C^* \cdot(Z(m^f)\setminus C)$ is open in $E$. Therefore this interior comes with a natural complex structure. Note  also that $\hat Q_f$ comes with a map $\qg:\hat Q_f\to B$, whose restriction to $Q^s_f$ is a holomorphic submersion.  Choosing points $b_\pm\in B_\pm:=(\pm f)^{-1}(0,\infty)$ (and supposing $r'>0$, $r''>0$), the fibers $\qg^{-1}(b_\pm)$ are smooth complex manifolds related by a flip. This explains the choice of the terminology "flip passage". More precisely, put $P':=\P(E')$, $P'':=\P(E'')$ and let $P'_\pm$ ($P''_\pm$) be the restriction of $P'$ ($P''$) to $B_\pm$. The projective bundles $P'_-$, $P''_+$ are naturally embedded in $Q^s_f$. With these notations we see that the fiber $\qg^{-1}(b_+)$ is obtained from the fiber $\qg^{-1}(b_-)$ by ``replacing" $P'_{b_-}$ with $P''_{b_+}$. We can state now
\begin{pb} Let $Z\subset Q^s_f$ be a pure $k$-dimensional analytic set, and $[Z]^{BM}\in H_{2k}^{BM}(Q^s_f,\Z)$ its fundamental class in Borel-Moore homology. Determine the homological boundary $\delta([Z]^{BM})\in H_{2k-1}(\P(F'\oplus \bar F''),\Z)$ of $[Z]^{BM}$. 
\end{pb}
The two problems Problem 1, Problem 2, are related by the the holomorphic model theorem proved in \cite{Te5}, which we explain briefly below (see section \ref{HMT} for details). Coming back to our gauge theoretical framework, let $\lambda\in{\cal D}ec(E)$ with   a circle of regular reductions $C_\lambda$, and fix $c\in\lambda$. The holomorphic model theorem gives a system $(p'_c:E'_c\to B_c, p''_c:E''_c\to B_c, f_c)$ as above, a diffeomorphism $\Psi_c$ between a neighborhood $O_c$ of $\partial \hat Q_{f_c}$ in $\hat Q_{f_c}$ and a neighborhood ${\cal O}_\lambda$ of ${\cal P}_\lambda$ in $\hat {\cal M}^\pst_\lambda$, where $\hat Q_{f_c}$ is the blow up flip passage associated with $(p'_c:E'_c\to B_c, p''_c:E''_c\to B_c, f_c)$. Moreover, $\Psi_c$ induces a  diffeomorphism  $\partial \hat Q_{f_c}=\P(F'_c\oplus \bar F''_c)\textmap{\simeq} {\cal P}_\lambda$,  a biholomorphism $O_c\setminus \partial \hat Q_{f_c}\textmap{\simeq} {\cal O}_\lambda\setminus {\cal P}_\lambda$, and maps the projective bundles $P'_{c,-}$, $P''_{c,+}\subset Q^s_{f_c}$ onto the extension spaces ${\cal P}^c_\varepsilon$, ${\cal P}^{c_1(E)-c}_\varepsilon\subset {\cal M}^\st$ respectively. \vspace{2mm}  
We explain now our answer to {Problem 2}, which  concerns an arbitrary blow up flip passage $\hat Q_f$ (see section \ref{boundaryZ}). Let $\Theta'$, $\Theta''$ be the tautological line bundles of the projective bundles $P':=\P(E')$, $P'':=\P(E'')$, and denote by $Q$ the total space of the line bundle $p_1^*(\Theta')\otimes p_2^*(\Theta'')$ over the fiber product $P'\times_B P''$. We identify this fiber product with the zero section of $Q$. One has a natural biholomorphism $j_1:Q\setminus (P'\times_B P'')\to Q^s_f\setminus (P'_-\cup P''_+)$. 

\newtheorem*{th-partialZ}{Theorem \ref{partialZ}} 
\begin{th-partialZ} 
With the notations above, suppose $r'>0$, $r''>0$, and let  $Z\subset Q^s_f$ be an analytic subset of pure dimension $k\geq 1$ such that $\dim(Z\cap (P'_-\cup P''_+))<k$.  Then
\begin{enumerate} 
\item The closure $\tilde Z$ of $j_1^{-1}(Z\setminus (P'_-\cup P''_+))$ in $Q$ is a pure $k$-dimensional analytic subset of $Q$ with   $\dim (\tilde Z\cap(P'\times_B P''))<k$, 
\item Choosing a point $x\in C$, the equality
$$\delta[ Z]^{BM}=[C]\otimes  J^{E''_{x}}_{E'_{x}} \big([\tilde Z]^{BM} \cdot (P'_{x}\times P''_{x})  \big)$$
holds in $H_1(C,\Z)\otimes H_{2k-2}(\P(F'_{x}\oplus\bar F''_{x}),\Z)=H_{2k-1}(\P(F'\oplus\bar F''),\Z)$.
\end{enumerate} 
\end{th-partialZ}
In this theorem $J^{E''_{x}}_{E'_{x}}: H_{2s}(\P(E'_{x})\times \P(E''_{x}),\Z) \to H_{2s+2}(\P(E'_{x}\oplus\bar E''_{x}),\Z)$ denotes the join morphism defined in section \ref{joinsection} (see formula (\ref{HomoJoinNew}), Remark \ref{NewJoinOnGen}).

Our explicit applications concern the moduli space considered in our program  for proving the existence of curves on class VII surfaces \cite{Te3}. Let $(X,g)$ be a class VII surface endowed with a Gauduchon metric, ${\cal K}$  the canonical line bundle of $X$, $K$ its underlying differentiable line bundle,  $(E,h)$ a Hermitian rank 2-bundle on $X$ with $c_2(E)=0$ and $\det(E)=K$, and ${\cal M}^\st$, ${\cal M}^\pst$ the two moduli spaces  associated with the data $(X,g,E,h,{\cal K})$. ${\cal M}^\pst$ is always compact \cite[Theorem 1.11]{Te3}. Moreover, assuming that $\deg_g({\cal K})<0$, $X$ is minimal and is not an  Enoki surface, it follows that ${\cal M}^\st$ is a smooth manifold of dimension $b_2(X)$, and all the reductions in ${\cal M}^\pst$ are regular \cite[Theorem 1.3]{Te3}.   We suppose for simplicity that $H_1(X,\Z)=\Z$, which implies that $H^2(X,\Z)$ is torsion free. Let  $(e_i)_{1\leq i\leq b_2(X)}$ be a Donaldson basis of $H^2(X,\Z)$, i.e. a basis which satisfies the conditions
$$ e_i\cdot e_j=-\delta_{ij}\ , \ \sum_{i=1}^{b_2(X)} e_i=c_1({\cal K})\ .
$$
\cite[section 1.1]{Te3}. Put $\Ig:=\{1,\dots,b_2(X)\}$, and for $I\subset \Ig$ put
$$\bar I:=\Ig\setminus I\ ,\ e_I:=\sum_{i\in I} e_i\ .$$
Since $c_1(E)=\sum_{i\in\Ig} e_i$, we have ${\cal D}ec(E)=\big\{\{e_I,e_{\bar I}\}\big|\ I\subset\Ig\big\}$, hence ${\cal M}^\pst$ has $2^{b_2(X)-1}$ circles of reductions. Using the notation  introduced in Remark \ref{RemIntro}, we have $r_{e_I}=|\bar I|$. Therefore, putting $\kg:=\frac{1}{2}\deg_k({\cal K})$, we obtain,  for every $I\subsetneq \Ig$, a projective bundle ${\cal P}^{e_I}_\varepsilon\to \Pic^{e_I}(X)_{(\kg-\varepsilon,\kg)}$ with fiber $\P^{|\bar I|-1}_\C$, and an embedding of ${\cal P}^{e_I}$ in ${\cal M}^\st$.  In particular ${\cal P}^0_\varepsilon$ is open in ${\cal M}^\st$.   This open subset  defines an end (the ``known end") of ${\cal M}^\st$.  As explained before, in the general case, for a given index set $I\subsetneq \Ig$ (including for $\emptyset$) is very hard to describe  the Zariski closure of ${\cal P}^{e_I}_\varepsilon$ in ${\cal M}^\st$, even when $X$ is a known class VII surface. The result below, proved using the methods developed in this article, will be used in \cite{Te6}.

\newtheorem*{th-application}{Proposition \ref{application}} 
\begin{th-application}
Let $X$ be a minimal class VII surface with $H_1(X,\Z)\simeq\Z$ and $b_2(X)=3$, which is not an Enoki surface. Let $g$  be a Gauduchon metric on $X$ with $\deg_g({\cal K})<0$. Then
\begin{enumerate}[1.]
\item The component  ${\cal M}_0\in\pi_0({\cal M}^\pst)$ of ${\cal P}^0_\varepsilon$   contains all four circles of reductions,
\item Let $i\in\Ig$. The Zariski closure of ${\cal P}^{e_i}_\varepsilon$ in ${\cal M}^\st$ has pure dimension 2, does not intersect ${\cal P}^0_\varepsilon$,  and contains the curve ${\cal P}^{e_I}_\varepsilon$ for a subset $I\subset\Ig$ of cardinal 2.
\end{enumerate}
\end{th-application}

If a stable bundle ${\cal E}$ is the central term of an extension of the form
$$0\to {\cal L}\to {\cal E}\to {\cal K}\otimes {\cal L}^\vee\to 0
$$
with $c_1({\cal L})=e_I$, we agree to say that ${\cal E}$ is a stable extension of type $I$.
The first statement of Proposition \ref{application} shows that, for every $I\subsetneq \Ig$ there exists a family of stable extensions of type $\emptyset$ which converges in ${\cal M}^\st$ to a stable extension of type $I$. The second statement shows that for any $i\in\Ig$ there exists $I_i\subset\Ig$ of cardinal 2, and a family of extensions of type $\{i\}$ which converges in ${\cal M}^\st$ to a stable extension of type $I_i$. These statements cannot be proved using complex geometric arguments.

\section{Flip passages and the holomorphic  model theorem }

\subsection{The spherical blow up}\label{BlUpSection}

Let $M$ be an $m$-manifold,  $W\subset M$ a closed $r$-dimensional submanifold. The spherical blow up $\hat M_W$  of $M$ at $W$ is a manifold with boundary, which comes with a smooth map $p:\hat M_W\to M$ with the following properties:
\begin{enumerate}[1.]
\item The boundary $\partial \hat M_W$ of $\hat M_W$ coincides with the sphere bundle $\{N_W^M\}^*/\R_{>0}$, where $\{N_W^M\}^*$ denotes the complement of the zero section in the normal bundle $N_W^M$ of $W$ in $M$,
\item One has $p^{-1}(W)=\partial \hat M_W$, and the restriction $\resto{p}{\partial \hat M_W}:\partial \hat M_W\to W$ coincides with the bundle projection $\{N_W^M\}^*/\R_{>0}\to W$,
\item The restriction $\resto{p}{\hat M_W\setminus \partial \hat M_W}:\hat M_W\setminus \partial \hat M_W\to M\setminus W$ is a diffeomorphism.
\end{enumerate}

We refer to \cite{AK} for the construction and the functoriality properties of the spherical blow up. We will give now explicit constructions for the spherical blow up $\hat M_W$ in several situations which are of interest for us.
\\
\\
1. The case when $W=Z(s)$ for a regular section $s\in\Gamma(E)$.\vspace{1mm}

Let $p:E\to M$ be a   real rank $r$-bundle on $M$ and $s\in\Gamma(E)$  a section  which is regular (transversal to the zero section) at any vanishing point. 
In this case the zero locus $W:=Z(s)$ is a smooth codimension $r$  submanifold of $M$, and the intrinsic derivative of $s$ defines an isomorphism $N^M_W\to \resto{E}{W}$. The spherical blow up $\hat M_W$ can be obtained as follows: endow $E$ with an Euclidean structure, and let $\pi:S(E)\to M$ be the corresponding sphere bundle. Then $\hat M_W$ can be identified with the submanifold 
$$\hat s:=\{(y,\rho)\in S(E)\times[0,\infty)|\ \rho y=s(\pi(y))\}
$$
of $S(E)\times[0,\infty)$.  Note that the map $(y,\rho) \mapsto \rho y-s(\pi(y))$ is a section in the bundle $(\pi\circ p_1)^*(E)$ over $S(E)\times[0,\infty)$, and this section is transversal to the zero section of this pull back bundle. Via the identification $\hat M_W=\hat s$, the contraction map $\hat s\to M$ is the restriction $\resto{\pi\circ p_1}{\hat s}$.
\\ \\
2. The case when $M$ is the total space of a vector bundle $p:E\to B$, and $W=Z(\theta_E)$, where $\theta_E\in\Gamma(E,p^*(E))$ is the tautological section of $E$. \vspace{1mm}

The tautological section  $\theta_E\in\Gamma(E,p^*(E))$ of $E$ is defined by $\theta_E(y)=y$, where the right hand term is regarded as an element of the fiber $p^*(E)_y$. The zero locus of $\theta_E$ is the zero section $B\subset E$. This is a special case of 1., hence $\hat E_B=\hat \theta_E$. On the other hand $\hat \theta_E$ can be obviously identified with $S(E)\times[0,\infty)$ via the map $S(E)\times[0,\infty)\ni (y,\rho)\mapsto (\rho y,y,\rho)\in \hat \theta_E$. Therefore we get an identification
$$\hat E_B=S(E)\times[0,\infty)\ ,
$$
and, via this identification, the contraction map $S(E)\times[0,\infty)\to E$ is  $(y,\rho)\mapsto \rho y$.
\begin{ex} Consider the inclusion map  $j:Z(m^f)\hookrightarrow E$ of the submanifold intervening in the definition of a blow up flip passage (see section \ref{intro}, \cite{Te5}). Then $j^{-1}(B)=C$ and $j$ induces a bundle isomorphism $N_C^{Z(m^f)}=\resto{N^E_B}{C}$. This implies that the blow up $\widehat{Z(m^f)}_C$ can be identified with the preimage of $Z(m^f)$ in the blow up $\hat E_B$ of $E$ at the zero locus of its tautological section.  Since $\hat E_B=S(E)\times[0,\infty)$ 
\begin{equation}\label{hatZ}
\widehat{Z(m^f)}_C=\big\{ ((y',\bar y''),\rho)\in S(E'\oplus \bar E'')\times[0,\infty)|\ m^f(\rho y', \rho y'')=0\big\}\ .
\end{equation}
\end{ex}
{\ }\\ 
3.  The case when $M$ is the total space of a vector bundle $p:E\to B$, and $W=Z(\theta_E,p^*(s))$, where $s\in \Gamma(F)$ is a section of a bundle $q:F\to B$. \vspace{1mm}

Suppose $s\in \Gamma(F)$ is transversal to the zero section, and let $W:=Z(s)$ be its zero submanifold. Then $(\theta_E,p^*(s))\in \Gamma(p^*(E)\oplus p^*(F))$ is transversal to the zero section, $Z((\theta_E,p^*(s))$ can be identified with $W$ via $p$, and one has an obvious identification $N_W^E=\resto{(E\oplus F)}{W}$. In the same way as in 2. we get an identification
$$\hat E_W=\{((y,v),\rho)\in S(E\oplus F)\times[0,\infty)|\ \rho v= s(p(y))\}\ ,
$$ 
and, via this identification,  the contraction map $\hat E_W\to E$ is  $((y,v),\rho)\mapsto \rho y$.

\subsection{The holomorphic model theorem}\label{HMT}

 Let $X$ be a  surface with  $b_1(X)=1$, $p_g=0$. For such a surface the canonical morphism $H^1(X,\C)\to H^1(X,{\cal O}_X)$ is an isomorphism, hence  $\Pic^0(X)=H^1(X,{\cal O}_X)/H^1(X,\Z)\simeq \C^*$. 
 
 Fix $x_0\in X$, and let $\mathscr{L}$ be the Poincaré line bundle  normalized at   $x_0$ on the product $\Pic(X)\times X$. Let $p_1:\Pic(X)\times X\to \Pic(X)$, $p_2:\Pic(X)\times X\to X$ be the two projections.   The   $H^1(\Pic(X),\Z)\otimes H^1(X,\Z)$-term of the Künneth decomposition of $c_1(\mathscr{L})$ can be interpreted as a morphism    $\delta:H_1(X,\Z)\to H^1(\Pic(X),\Z)$.
 
 Fix a Gauduchon metric $g$ on $X$. For any $c\in H^2(X,\Z)$, $t\in\R$ the level set   $\Pic^c(X)_{\{t\}}:=\{\resto{\deg_g}{\Pic^c(X)}\}^{-1}(t)$ is a circle, and $\delta$ induces an isomorphism $H_1(X,\Z)\to H^1(\Pic^c(X)_{\{t\}},\Z)$. We will always endow   $\Pic^c(X)_{\{t\}}$ with the boundary orientation of $\partial \Pic^c(X)_{[t,+\infty)}$. With this convention we get an isomorphism $H_1(X,\Z)\to \Z$ given by $h\mapsto \langle \delta(h),[\Pic^c(X)_{\{t\}}]\rangle$, which is independent of $c\in H^2(X,\Z)$, $t\in\R$ and the Gauduchon metric $g$. This isomorphism defines a distinguished generator $\gamma_X\in H^1(X,\Z)$. Therefore, by definition, we have
 \begin{equation}\label{gamma}
 \langle \gamma_X,h\rangle =\langle \delta(h),[\Pic^c(X)_{\{t\}}]\rangle\,\ \forall h\in H_1(X,\Z)\ .
 \end{equation}

Let $E$  be a Hermitian rank 2-bundle  with $c_1(E)\not\in 2H^2(X,\Z)$ on $X$, ${\cal D}$ a  holomorphic structure on $\det(E)$,  $\lambda\in {\cal D}ec(E)$, and $c\in\lambda$. We define the harmonic map $f_c:\Pic^c(X)\to\R$ by
$f_c([{\cal L}]):=\pi\big(\deg_g({\cal L})-\dg),\hbox{ where }\dg:=\frac{1}{2}\deg_g({\cal D})$.
The vanishing circle $C_c:= f_c^{-1}(0)=\Pic^c(X)_{\{\dg\}}$ can be identified  with the reduction circle $C_\lambda\subset {\cal M}^\pst_{\cal D}(E)$ via the isomorphism $k_c:C_c\to C_\lambda$ given by $k_c([{\cal L}]):=[{\cal L}\oplus ({\cal D}\otimes {\cal L}^\vee)]$.
Using the Riemann-Roch and Grauert's   theorems we obtain   \cite[Proposition 3.3]{Te5}:
\begin{re} Suppose $C_\lambda$ is a circle of regular reductions. There exists a Zariski open neighborhood  $U$ of  $C_c$ in $\Pic^c(X)$ such that the restrictions of the coherent sheaves $R^1p_{1*}\big(\mathscr{L}^{\otimes 2}\otimes p_2^*({\cal D}^\vee)\big)$, $R^1p_{1*}\big(\mathscr{L}^{\otimes -2}\otimes p_2^*({\cal D})\big)$
to $U$  are locally free of ranks
$$r'_c=-\frac{1}{2} (2c -c_1(E))(2c -c_1(E)+c_1(X)),\ r''_c=-\frac{1}{2} (-2c +c_1(E))(-2c +c_1(E)+c_1(X))
$$
respectively, and the fibers of the corresponding bundles at a point $y\in U$ are identified with $H^1\big(\mathscr{L}^{\otimes 2}_{y}\otimes p_2^*({\cal D}^\vee)\big)$, $H^1\big(\mathscr{L}^{\otimes -2}_{y}\otimes p_2^*({\cal D})\big)$.
\end{re}

Therefore, for sufficiently small  $\varepsilon>0$, we obtain holomorphic bundles  $E'_c$, $E''_c$ of ranks $r'_c$, $r''_c$ on the annulus $B_c:=\Pic^c(X)_{(\dg-\varepsilon,\dg+\varepsilon)}=f_c^{-1}(-\pi\varepsilon,\pi\varepsilon)$.

The restrictions $\resto{E'_c}{C_c}$, $\resto{E''_c}{C_c}$ can be endowed with natural Hermitian metrics, by identifying them with suitable harmonic spaces \cite{Te5}. Using \cite[Remark 2.8]{Te5} we obtain a well defined blow up flip passage, which will be denoted by $\hat Q_{f_c}$. Correspondingly, we  put $F'_c:=\resto{E'_c}{C_c}$, $F''_c:=\resto{E''_c}{C_c}$, $P'_c:=\P(E'_c)$, $P''_c:=\P(E''_c)$, $P'_{c,\pm}:=\resto{P'_c}{B_{c,\pm}}$, $P''_{c,\pm}:=\resto{P''_c}{B_{c,\pm}}$.

  \begin{thry} \label{hol-model}
  
Under the assumptions and with the notations above there exists an open neighborhood $O_c$ of $\partial\hat Q$ in $\hat Q_{f_c}$ and a diffeomorphism $\Psi_c:O_c\to {\cal O}_\lambda$ onto a smooth open neighborhood ${\cal O}_\lambda$ of   ${\cal P}_\lambda$ in   $\hat{\cal M}^\pst_\lambda$ such that
\begin{enumerate}[1.] 
\item  $\Psi_c$ induces a smooth bundle isomorphism $\partial \Psi_c$ which fits in the commutative diagram\vspace{-2mm}
\begin{equation}\label{bundle-iso}\begin{diagram}[h=6mm]
\P (F'_c\oplus  \bar F''_c)= \partial\hat Q_{f_c}  &\rTo^{\partial \Psi_c} & {\cal P}_\lambda\\
\dTo & &\dTo_{\resto{p_\lambda}{{\cal P}_\lambda}}\\
C_c&\rTo^{k_c} & C_\lambda \ ,
\end{diagram}
\end{equation}
 
\item $\Psi_c$  induces a biholomorphism $O_c\setminus \partial\hat Q_{f_c}\to {\cal O}_\lambda\setminus {\cal P}_\lambda$,
\item $\Psi_c(P'_{c-})={\cal P}^c_\varepsilon\cap {\cal O}_\lambda$,    $\Psi_c(P''_{c+})={\cal P}^{c_1(E)-c}_\varepsilon\cap {\cal O}_\lambda$,
\item Denoting by $\nu(h)$, $\nu(u)$ the Donaldson classes associated with $h\in H_1(X,\Q)$, $u\in H_2(X,\Q)$, we have
$$(\partial\Psi_c)^*(\nu(h))=\delta(h)\otimes W_c\ ,\ (\partial\Psi_c)^*(\nu(u))=\frac{1}{2} \langle 2c-c_1(E),u\rangle W_c\ ,
$$
where $W_c$ is the positive generator of $H_2(\P(F'_c\oplus \bar F''_c),\Z)$.
\end{enumerate}
 \end{thry}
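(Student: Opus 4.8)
\smallskip

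\noindent\emph{A proof proposal.} The plan is to construct the model by a family version of equivariant Kuranishi theory for projectively ASD connections, the equivariance being with respect to the stabilizer of a reducible instanton. First I would work at a single point: fix $y_0=[{\cal L}_0]\in C_c$, so that $k_c(y_0)=[{\cal E}_0]$ with ${\cal E}_0={\cal L}_0\oplus{\cal N}_0$, ${\cal N}_0:={\cal D}\otimes{\cal L}_0^\vee$. Since $c_1(E)\notin 2H^2(X,\Z)$ the two summands are non-isomorphic, so the stabilizer of $[{\cal E}_0]$ is $S^1\subset\C^*$, and the trace-free endomorphism bundle splits $S^1$-equivariantly as
\[
\End_0({\cal E}_0)={\cal O}_X\oplus\big({\cal L}_0^{\otimes 2}\otimes{\cal D}^\vee\big)\oplus\big({\cal L}_0^{\otimes -2}\otimes{\cal D}\big),
\]
with $S^1$-weights $0$, $+2$, $-2$. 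Hence the Kuranishi deformation space $H^1(\End_0({\cal E}_0))$ splits into a weight-$0$ part $H^1({\cal O}_X)\cong\C$ (deformations of the determinant-$c$ line bundle --- the base direction) and the weight-$\pm2$ parts $H^1({\cal L}_0^{\otimes 2}\otimes{\cal D}^\vee)$, $H^1({\cal L}_0^{\otimes -2}\otimes{\cal D})$ (extension classes of the two opposite types). Since $p_g(X)=0$ we have $H^2({\cal O}_X)=0$, and the regularity of $C_\lambda$ forces the remaining obstruction groups (the $H^2$ of the weight-$\pm2$ summands) to vanish, so the Kuranishi map is unobstructed and the local slice is cut out by the Hermitian--Einstein equation alone.

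Next I would globalize over the annulus $B_c=\Pic^c(X)_{(\dg-\varepsilon,\dg+\varepsilon)}$: the Poincaré bundle $\mathscr{L}$ together with Grauert's direct-image theorem (the Remark of section \ref{HMT}, applied to $R^1p_{1*}(\mathscr{L}^{\otimes 2}\otimes p_2^*({\cal D}^\vee))$ and $R^1p_{1*}(\mathscr{L}^{\otimes -2}\otimes p_2^*({\cal D}))$) organize the weight-$\pm2$ cohomology into the holomorphic bundles $E'_c$, $E''_c$ of ranks $r'_c$, $r''_c$, with Hermitian metrics coming from harmonic representatives. This presents a neighborhood of $C_\lambda$ in ${\cal M}^\pst_\lambda$ as the $S^1$-quotient of the zero set of the $S^1$-moment map on a neighborhood of the zero section in the total space of $E'_c\oplus\bar E''_c\to B_c$. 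The key computation is that, in these trivializations, the moment map has the form
\[
\mu(b;y',y'')=\tfrac12\big(\|y'\|^2-\|y''\|^2\big)+f_c(b)+\big(\text{higher order in }(y',y'')\big),
\]
so to leading order it is exactly the map $m^{f_c}$ of section \ref{intro}; the affine term is $f_c(b)=\pi(\deg_g({\cal L}_b)-\dg)$, which vanishes on $C_c$ and is a submersion there precisely because $b_1(X)=1$, $p_g(X)=0$ (so $\Pic^0(X)\simeq\C^*$) and $f_c$ is harmonic. Invoking regularity once more, an $S^1$-equivariant implicit-function / normal-form argument removes the higher-order terms by a change of coordinates holomorphic on the stable (i.e. free) locus and smooth across $C_c$, identifying the germ of ${\cal M}^\pst_\lambda$ along $C_\lambda$ with the germ of $Z(m^{f_c})/S^1$ along $C_c$.

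The blow-up statements then follow. By functoriality of the spherical blow up (section \ref{BlUpSection}), blowing up ${\cal M}^\pst_\lambda$ at $C_\lambda$ corresponds to blowing up $Z(m^{f_c})$ at $C_c$ and passing to the $S^1$-quotient, which is $\hat Q_{f_c}$; this yields the diffeomorphism $\Psi_c:O_c\to{\cal O}_\lambda$, which on the interior is a biholomorphism since both sides are $\C^*$-quotients of the stable loci where the Kuranishi identification was arranged to be holomorphic. The normal-bundle identification $N^{Z(m^{f_c})}_{C_c}\cong F'_c\oplus\bar F''_c$ (Example of section \ref{BlUpSection}) matches the weight-$\pm2$ splitting of $N^{{\cal M}^\pst}_{C_\lambda}$, producing the bundle isomorphism $\partial\Psi_c:\P(F'_c\oplus\bar F''_c)\to{\cal P}_\lambda$ and the commutativity of (\ref{bundle-iso}) --- compatibility with $k_c$ being built into the parametrization by $B_c$. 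For part 3: over $B_{c,-}$ one has $f_c<0$, so $\mu=0$ forces $\|y'\|^2>\|y''\|^2$, in particular $y'\neq0$, and the locus $\{y''=0\}$ descends to $P'_{c,-}$; under the Kuranishi correspondence $y''=0$ means ${\cal E}$ is a genuine extension with sub-line-bundle ${\cal L}$, i.e. a point of the extension family ${\cal P}^c_\varepsilon$ of Remark \ref{RemIntro}; symmetrically $P''_{c,+}$ maps to ${\cal P}^{c_1(E)-c}_\varepsilon$. Finally part 4 is a Chern-class computation: restricting the universal $\PU(2)$-bundle over ${\cal B}^*\times X$ to ${\cal P}_\lambda\times X$ and expressing it through the family of extensions parametrized by $\P(F'_c\oplus\bar F''_c)$, one slant-multiplies against $h\in H_1(X,\Q)$ and $u\in H_2(X,\Q)$; the $H^1(\Pic^c(X))\otimes H^1(X)$-Künneth component of $c_1(\mathscr{L})$ contributes the factor $\delta(h)$, the fiberwise tautological classes contribute $W_c$, and the coefficient $\tfrac12\langle 2c-c_1(E),u\rangle$ is the difference of the two summand-degrees paired with $u$ --- this is precisely \cite[Corollary 2.6]{Te2}, which I would quote.

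\smallskip

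\noindent\emph{Main obstacle.} The technical heart is the normal-form step: showing that the higher-order terms of the Kuranishi map can be absorbed by an $S^1$-equivariant change of coordinates that remains holomorphic on the stable stratum and extends smoothly over $C_c$, i.e. that the model is genuinely exact and not merely a leading-order approximation. This is where regularity is used essentially (a finite-dimensional unobstructed slice, with free $S^1$-action off $C_c$), and it is the core of the holomorphic model theorem of \cite{Te5}. A secondary difficulty is the global bookkeeping over the circle $C_\lambda$ --- gluing the pointwise models with the correct monodromy into the bundles $E'_c$, $E''_c$ over the annulus --- which is handled by the Poincaré-bundle / direct-image formalism recalled in section \ref{HMT}.
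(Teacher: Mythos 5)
Your proposal matches the paper's route: the paper does not actually prove Theorem \ref{hol-model} in this article but cites \cite{Te5} for parts 1--2, observes that part 3 follows from the construction of $\Psi_c$, and invokes \cite[Corollary 2.6]{Te2} for part 4, and your sketch (the $S^1$-equivariant Kuranishi/slice picture with the weight decomposition of $\End_0({\cal E}_0)$, the moment-map model $m^{f_c}$, globalization over the annulus via the Poincaré bundle and Grauert's theorem, and functoriality of the spherical blow up) is exactly the strategy of the cited reference, with the same appeal to \cite{Te2} for part 4. The normal-form step you single out as the main obstacle -- showing the model is exact, not just a leading-order approximation, via an $S^1$-equivariant change of coordinates holomorphic on the stable locus -- is precisely the content of \cite{Te5} that this paper takes as given, so your attempt is essentially as complete as the paper's own proof of this statement and follows the same approach.
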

 
 The first two statements are proved in \cite{Te5}, and the third can be proved easily using the construction of $\Psi_c$. The fourth follows from \cite[Corollary 2.6]{Te2}.

\section{The homological boundary of an analytic set $Z\subset Q^\st_f$ }

    \subsection{The real blow up of a Borel-Moore homology class and its boundary}\label{RealBlowUpClass}

We start with a brief account on the well known intersection theory in Borel-Moore theory (see for instance \cite[section 1.12]{BH}) in a very particular framework. Since in our results the signs play a crucial role, and in the literature one can find  different conventions for the  relevant objects intervening in our formulae (the cap product, the orientation of the normal bundle of an oriented submanifold of an oriented manifold, the Thom isomorphism), we will write down carefully the  formulae we need.
\vspace{2mm}

In general, if $M$ is a differentiable manifold, and $Y\subset M$ a submanifold of $M$, we will always use the direct sum decomposition $\resto{T_M}{Y}=T_Y\oplus N_Y^M$ (defined by a Riemannian metric on $M$)  to orient anyone  of the three objects $M$, $Y$, $N_Y^M$ using orientations of the other two. In particular, the total space $E$ of an oriented Euclidean bundle over an oriented base $Y$ is oriented such that the obvious isomorphism $\resto{T_E}{Y}=T_Y\oplus E$ is orientation preserving. Note that \cite{BT} uses a different convention.
\vspace{2mm}

For an oriented $r$-dimensional Euclidean vector space $F$ we denote by $B(F)$ the open unit ball of $F$, by $[F,F\setminus B(F)]$ the generator of $H_r(F,F\setminus B(F),\Z)$ defined by the fixed orientation, and by $\{F,F\setminus B(F)\}$ the corresponding generator of $H^r(F,F\setminus B(F),\Z)$.

Let $p:E\to Y$ be an oriented Euclidean rank $r$-bundle over a connected, oriented, closed manifold $Y$. We denote by  $\Phi_E$ the section in the local coefficient system
$$x\mapsto H_r(E_x,E_x\setminus B(F_x),\Z)
$$
given by $x\mapsto [E_x,E_x\setminus B(F_x)]$, by  $B(E)$ the unit ball bundle of $E$, and by $\phi_E\in H^r(E,E\setminus B(E),\Z)$ its Thom class.   For any $k\in\N$ the morphism 
\begin{equation}\label{Thom}\tau_k^E:H_{r+k}(E,E\setminus B(E),\Z)\to H_k(Y,\Z)\ ,\ \tau_k^E(u):=(-1)^{kr}p_*(\phi_E\cap u)
\end{equation}
is an isomorphism \cite[Theorem 5.7.10]{Sp}. In this formula, on the right, we used the sign convention of \cite[Section VII.12]{Do} for the definition of the cap product, which does not agree with the conventions used in \cite{Sp}, \cite{BH}.
The factor $(-1)^{kr}$   has been inserted in order to recover the standard isomorphism
$$h\otimes [F,F\setminus B(F)]\mapsto h
$$
in the case of the trivial bundle $E=Y\times F\textmap{p_1} Y$. Having this case in mind, put
$$h\otimes \Phi_E:={\{\tau_k^E}\}^{-1}(h)\ .
$$ 
We will identify $H_*(F,F\setminus B(F),\Z)$, $H_*(E,E\setminus B(F),\Z)$ with the Borel-Moore homology $H_*^{BM}(B(F),\Z)$, $H_*^{BM}(B(E),\Z)$ of $F$ and $E$ respectively via the canonical isomorphisms
$$H_*(F,F\setminus B(F),\Z)\to H_*(\bar B(F),S(F),\Z)\to H_*^{BM}(B(F),\Z)\ ,
$$
$$H_*(E,E\setminus B(E),\Z)\to H_*(\bar B(E),S(E),\Z)\to H_*^{BM}(B(E),\Z)
$$
\cite[Section 1.2]{BH}. Via the first identification,  $[F,F\setminus B(F)]$ corresponds to the fundamental class $[B(F)]$ of the oriented manifold $F$ in Borel-Moore homology, hence $\Phi_E$ corresponds to the section $\Bg_E$ of the local coefficient system
$$x\mapsto H_r^{BM}(B(E_x),\Z)
$$
given by $x\mapsto [B(E_x)]$. Therefore the isomorphism $\tau_k^E$ induces an isomorphism $\tau_k^{B(E)}:H_{r+k}^{BM}(E,\Z)\to H_k(Y,\Z)$, and we can put 
$$h\otimes\Bg_E:=\{\tau_k^{B(E)}\}^{-1}(h)\ .
$$

By our orientation convention for $E$ (and implicitly $B(E)$), the fundamental class of $B(E)$ in Borel-Moore homology will be given by $[B(E)]=[Y]\otimes \Bg_E$.
 With these conventions, the morphism $p^*:H^*(Y,\Z)\to H^*(B(E),\Z)$ commutes with the Poincaré duality isomorphisms on $Y$ and $B(E)$, i.e. one has the identity
\begin{equation}\label{PDComm}
p^*(b)\cap [B(E)]=(b\cap [Y])\otimes \Bg_E\  \forall b\in H^*(Y,\Z)\ .
\end{equation}

\begin{dt} \label{HomInt} Let $M$ be a connected, oriented $m$-dimensional manifold. For a class $c\in H_k^{BM}(M,\Z)$ and a closed, oriented $l$-dimensional submanifold $W\subset M $ we define the homological intersection of   $c$ with $W$ by
$$c\cdot W:=j_W^*(b)\cap [W]\in H_{l+k-m}(W,\Z)\ ,$$
where $j_W:W\hookrightarrow M$ is the inclusion map, and $b\in H^{m-k}(M,\Z)$ is the Poincaré dual of $c$, i.e. one has $c=  b\cap [M]$.
\end{dt}  

\begin{re}\label{IntRem} In the conditions of Definition \ref{HomInt} it holds \begin{enumerate}[1.] 
\item  If $c$ is the fundamental class of a closed, oriented submanifold $Z\subset M$ which intersects $W$ transversally, then one has 
$$c\cdot W=(-1)^{(m-l)(m-k)}[W\cap Z]\ ,
$$
where $T=W\cap Z$ is regarded as a submanifold of $W$ endowed with the  orientation induced by the orientation of $W$, and the  orientation of $N_{W\cap Z}^W$ given by the natural isomorphism $N_{W\cap Z}^W=\resto{N_Z^M}{W\cap Z}$.
\item  If $M$ is a complex manifold, $W$ a complex submanifold, and $c$ the fundamental class of an  $k$-dimensional analytic subset $Z\subset M$ such that $W\cap Z$ has pure dimension $l+k-m$, then $c\cdot W$ is the fundamental class of the analytic cycle $W\cap Z$ of $W$.
\end{enumerate}
\end{re}

The unit sphere $S(F)$ of an oriented Euclidean space $F$ is given the boundary orientation of the closed unit disk $\bar B(F)$, i.e. the orientation for which the obvious isomorphism $\R \eta\oplus T_{S(F)}\simeq  \resto{T_F}{S(F)}$ is orientation preserving, where $\eta$ is the outer normal field of $S(F)$. The sphere bundle $S(E)$ of an oriented Euclidean bundle $p:E\to Y$ is oriented using the same rule {\it fiberwise}.  We denote by  $\Sg_E$ the section of the local coefficient system $x\mapsto H_{r-1}(S(E_x),\Z)
$ given by   $x\mapsto [S(E_x)]$. Using the Leray-Hirsch theorem again, and putting $n:=\dim(Y)$, we get as above an isomorphism 
$$H_{n+r-1}(S(E))\simeq H_n(Y)\otimes H_{r-1}(S^{r-1},\Z)
$$
and an identification $[S(E)]=[Y]\otimes \Sg_E$. Using the Borel-Moore long exact sequence
$$\dots\to H_j^{BM}(S(E),\Z) \to H_j^{BM}(\bar B(E),\Z)\to H_j^{BM}(B(E),\Z)\textmap{\partial} H_{j-1}^{BM}(S(E),\Z) 
$$
associated with the open embedding $B(E)\hookrightarrow \bar B(E)$ \cite[Section 1.6]{BH}, and comparing it with the long exact sequence  of the pair $(\bar B(E),S(E))$, we get 
\begin{equation}\label{dBE}
\partial(h\otimes \Bg_E)=(-1)^{s}h\otimes \Sg_E\ \forall h\in H_s(Y,\Z)\ .
\end{equation}
In particular $\partial [B(E)]=\partial([Y]\otimes \Bg_E)=(-1)^{n}[Y]\otimes \Sg_E=(-1)^{n}[S(E)]$. 
\vspace{2mm}

Let $M$ be an oriented Riemannian $m$-dimensional manifold and $W\subset M$ a closed, oriented  $l$-dimensional submanifold. Let $\hat M_W$ be the spherical blow up of $M$ with center $W$. {\it If we omit orientations}, the boundary $\partial M_W$ can be identified with the sphere bundle $S(N_W)$ of the normal bundle $N_W$ of $W$ in $M$.

Let    $c\in H_k^{BM}(M,\Z)$ be  a $k$-dimensional Borel-Moore homology class. The image $c^W:=c_{M\setminus W}$ of $c$ under the canonical morphism $H_k^{BM}(M,\Z)\to H_k^{BM}(M\setminus W,\Z)$ can be regarded as a $k$-dimensional Borel-Moore homology class of the interior  $\hat M_W\setminus \partial \hat M_W$ of the manifold with boundary $\hat M_W$. Our problem is to compute explicitly the boundary $\delta(c^W)\in H_{k-1}^{BM}(\partial\hat M_W,\Z)$ in terms of topological invariants of the triple $(M,W,c)$.\vspace{2mm}

Let $K$ be a compact tubular neighborhood of $W$, denote by $U$ its interior and by $c_U$, $c_{M\setminus K}$ the images of $c$ in $H_k^{BM}(U,\Z)$, $H_k^{BM}(M\setminus K,\Z)$ via the canonical morphisms. The Borel-Moore long exact sequence associated with the open embedding $U\cup (M\setminus K)\hookrightarrow M$ contains the segment
$$\dots\to H_k^{BM}(M,\Z)\to H_k^{BM}(U\cup (M\setminus K),\Z)\textmap{\delta} H_{k-1}(\partial K,\Z)\to\dots \ ,
$$
which shows that
$$\delta(c_U)+\delta(c_{M\setminus K})=0\ .
$$
Using the obvious identification $H_*^{BM}(\partial \hat M_W,\Z)=H_*^{BM}(\partial K,\Z)$, we obtain
$$\delta(c^W)=-\delta(c_U)\ .
$$

Write $c=  b\cap [M]$, where $b\in H^{m-k}(M,\Z)$ is the inverse image of $v$ via the Poincaré duality isomorphism. Denoting by $j_U:U\hookrightarrow M$, $j_W:W \hookrightarrow M$ the embedding maps,  by $p^U_W:U\to W$ the projection map, and using formula (\ref{PDComm}), we get 
$$c_U=j_U^*(b)\cap [U]=(p^U_W)^*(j_W^*(b))\cap [U]=(j_W^*(b)\cap [W])\otimes \Bg_{N_W}=(c\cdot W)\otimes \Bg_{N_W}\ .
$$
Therefore, by (\ref{dBE}) we get $\delta(c_U)=(-1)^{l+k-m} (c\cdot W)\otimes \Sg_{N_W}$.
This proves
\begin{pr} \label{BoundForm}
Under the assumptions and with the notations above one has
\begin{equation}\label{boundaryBlowUp}
\delta (c^W)=(-1)^{l+k-m+1} (c\cdot W)\otimes \Sg_{N_W}\ .
\end{equation}
\end{pr}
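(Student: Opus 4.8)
The plan is to reduce the global computation of $\delta(c^W)$ to a purely local statement near $W$, using the Borel--Moore Mayer--Vietoris sequence, and then to identify the local picture with the Thom isomorphism of the normal bundle $N_W$. First I would fix a compact tubular neighborhood $K$ of $W$ in $M$, with interior $U$, chosen so that the spherical blow up $\hat M_W$ is obtained by replacing $K$ with the mapping cylinder of $S(N_W)\to W$; in particular $\partial\hat M_W\cong S(N_W)\cong\partial K$, and the interior $\hat M_W\setminus\partial\hat M_W$ is canonically $M\setminus W$. Under these identifications $c^W$ is just the restriction $c_{M\setminus W}$ of $c$ to $M\setminus W$, regarded as a Borel--Moore class on the open manifold $\hat M_W\setminus\partial\hat M_W$, and $\delta(c^W)$ is its image under the connecting map $\delta\colon H_k^{BM}(\hat M_W\setminus\partial\hat M_W,\Z)\to H_{k-1}^{BM}(\partial\hat M_W,\Z)$ of the pair $(\hat M_W,\partial\hat M_W)$.

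The key algebraic step is the Borel--Moore long exact sequence attached to the open covering of $M$ by $U$ and $M\setminus K$ (equivalently $M\setminus W$). Restricting the class $c$ to $U$ and to $M\setminus K$ and using exactness gives $\delta(c_U)+\delta(c_{M\setminus K})=0$, where both connecting homomorphisms land in $H_{k-1}^{BM}(\partial K,\Z)=H_{k-1}^{BM}(\partial\hat M_W,\Z)$; the sign is dictated by naturality once the two boundary maps are made to agree on the common piece. Since $c_{M\setminus K}$ is, after the blow up, precisely the class $c^W$ pushed into the interior, this yields $\delta(c^W)=-\delta(c_U)$, and the problem is now entirely local: compute $\delta$ of the restriction of $c$ to a tubular neighborhood of $W$.

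For the local computation I would write $c=b\cap[M]$ with $b\in H^{m-k}(M,\Z)$ the Poincaré dual of $c$, and use that the projection $p^U_W\colon U\to W$ of the tubular neighborhood is a homotopy equivalence, so $j_U^*(b)=(p^U_W)^*\bigl(j_W^*(b)\bigr)$. Then the naturality of cap product with respect to $p^U_W$, combined with the compatibility of $p^{U*}_W$ with Poincaré duality established in (\ref{PDComm}), identifies $c_U$ with $(j_W^*(b)\cap[W])\otimes\Bg_{N_W}=(c\cdot W)\otimes\Bg_{N_W}$, using Definition~\ref{HomInt}. Finally, applying the boundary formula (\ref{dBE}) for the disk bundle (with $s=l+k-m$, the dimension of the class $c\cdot W$ on $W$) gives $\delta(c_U)=(-1)^{l+k-m}(c\cdot W)\otimes\Sg_{N_W}$, and the sign flip from the Mayer--Vietoris step produces the stated formula $\delta(c^W)=(-1)^{l+k-m+1}(c\cdot W)\otimes\Sg_{N_W}$.

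The main obstacle is not any single deep idea but the careful bookkeeping of signs and orientation conventions: the cap product convention of \cite{Do} versus \cite{Sp}/\cite{BH}, the orientation of $N_W$ and of $S(N_W)$ via the boundary-of-disk rule, the normalization of the Thom class $\phi_E$ and the factor $(-1)^{kr}$ in (\ref{Thom}), and the sign in the Mayer--Vietoris connecting map. Each of these has been pinned down in the preceding subsection precisely so that the chain $c\rightsquigarrow c_U\rightsquigarrow\delta(c_U)\rightsquigarrow\delta(c^W)$ can be run without ambiguity; the real work is to verify that the identifications $\partial\hat M_W\cong S(N_W)$ and $H_*^{BM}(\partial\hat M_W)\cong H_*^{BM}(\partial K)$ are orientation-compatible with those conventions, so that the exponents $l+k-m$ and the final $+1$ come out correctly rather than off by a sign.
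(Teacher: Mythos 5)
Your proposal is correct and follows essentially the same route as the paper: the exact sequence for the open set $U\cup(M\setminus K)$ gives $\delta(c_U)+\delta(c_{M\setminus K})=0$, hence $\delta(c^W)=-\delta(c_U)$, and then the identification $c_U=(j_W^*(b)\cap[W])\otimes\Bg_{N_W}=(c\cdot W)\otimes\Bg_{N_W}$ via (\ref{PDComm}) together with (\ref{dBE}) yields the sign $(-1)^{l+k-m+1}$. The only cosmetic difference is your phrasing of the exact sequence as a Mayer--Vietoris for an ``open covering'' (the two open sets $U$ and $M\setminus K$ are disjoint and miss $\partial K$); the paper phrases it as the sequence of the open embedding $U\cup(M\setminus K)\hookrightarrow M$, which is what your argument actually uses.
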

\begin{ex} For the special case $k=m$, $c=[M]^{BM}$ we have $b=1\in H^0(M)$ and $c\cdot W=[W]$, hence
$$\delta ([M]^W)=(-1)^{l+1} [W]\otimes \Sg_{N_W}\ .
$$
\end{ex}

 \subsection{The join morphisms $H_k(\P(U)\times \P(V),\Z)\to H_{k+2}(\P(U\oplus V),\Z)$}\label{joinsection} 
 
Let $G$ be a Lie group and $F$ a differentiable manifold.
 
\begin{dt} \label{Bundle} The $F$-bundle associated with a {\it left} principal $G$-bundle $P\to B$ and a left action  $\alpha:G\times F\to  F$ is defined by  
$P\times_\alpha V:=\qmod{P\times V}{G}
$
where $P\times V$ is endowed with the diagonal left $G$-action. 
\end{dt}
The standard example is the tautological line bundle $\Theta_U$ on the projective space $\P(U)$ of a   finite  dimensional complex vector space $U$. Denoting by $\chi_0$  the  standard  character $\C^*\to \C^*=\GL(1,\C)$ given by $\zeta\mapsto \zeta$, one has 
$$\Theta_U= U^*\times_{\chi^{-1}_0}\C\ .
$$ 
Let $U$, $V$ be finite dimensional complex vector spaces. For a subset $A\subset \P(U)\times \P(V)$ we define its set theoretical join $J_{UV}(A)\subset \P(U\oplus V)$ by
$$J_{UV}(A):=\bigcup_{([u],[v])\in A}\P(\C u\oplus\C v)\ .
$$ 
We chose the word ``join" because $\P(\C u\oplus\C v)$ is the line   joining the pair of points $[u,0]$, $[0,v]\in \P(U\oplus V)$, hence $J_{U,V}(A)$ is the union of  joining lines associated with pairs  $([u],[v])\in A$.  Consider the projective line bundle 
$$p_{UV}:\P(p_1^*(\Theta_U)\oplus p_2^*(\Theta_V))\to \P(U)\times\P(V)$$
 on   $\P(U)\times\P(V)$. Using Definition \ref{Bundle}, the total space $\P(p_1^*(\Theta_U)\oplus p_2^*(\Theta_V))$ can be identified with the associated bundle $(U^*\times V^*)\times_{\alpha}\P^1$, where $\alpha:(\C^*\times\C^*)\times\P^1\to \P^1$ acts by
\begin{equation}\label{alpha}
\alpha((\zeta_1,\zeta_2),[z_0,z_1])= [\zeta_1^{-1}z_0, \zeta_1^{-1}z_1]\ .
\end{equation} 
The map $q_{UV}:\P(p_1^*(\Theta_U)\oplus p_2^*(\Theta_V))\to \P(U\oplus V)$  given by
$$q_{UV}([(u,v),[z_0,z_1]]):=[z_0 u,z_1 v]\ \  \forall (u,v)\in U^*\times V^*  ,\ \forall [z_0,z_1]\in\P^1_\C
$$
 is a modification which blows up the linear subvarieties $\P(U\times\{0\})$, $\P(\{0\}\times V)$ of $\P(U\oplus V)$.  We obtain the diagram 
\begin{equation}\label{JoinDiagram}
\begin{diagram}[s=8mm,w=4mm,midshaft]
&&\P(p_1^*(\Theta_U)\oplus p_2^*(\Theta_V))&& \\
&\ldTo^{p_{{\scriptscriptstyle UV}}} &  &\rdTo^{q_{{\scriptscriptstyle UV}}} &\\
\P(U)\times\P(V)&&&&\P(U\oplus V)
 \end{diagram}\ .
\end{equation}
\\  
With these notations one has 
\begin{equation}\label{join}
J_{UV}(A)=q_{UV}(p_{UV}^{-1}(A))\ .
\end{equation}

This formula allows us to define a homological version of the join map.  The Gysin exact sequence \cite[Theorem 9.3.3]{Sp} of the sphere bundle 
$$p_{UV}:\P(p_1^*(\Theta_U)\oplus p_2^*(\Theta_V))\to \P(U)\times\P(V)$$
 contains the segment 
$$H_{2k+3}(\P(U)\times\P(V),\Z)\stackrel{\partial}{\to} H_{2k}(\P(U)\times\P(V),\Z) \textmap{\sigma_{UV}} H_{2k+2}(\P(p_1^*(\Theta_U)\oplus p_2^*(\Theta_V)),\Z)$$
$$\textmap{p_{UV*}} H_{2k+2}(\P(U)\times\P(V),\Z)\textmap{\partial} H_{2k-1}(\P(U)\times\P(V),\Z)\to \dots\ ,
$$
Since the odd dimensional homology of $\P(U)\times\P(V)$ vanishes, it follows that $\sigma_{UV}$ is a  monomorphism, more precisely identifies $H_{2k}(\P(U)\times\P(V),\Z)$ with 
$$\ker\big(p_{UV*}:H_{2k+2}(\P(p_1^*(\Theta_U)\oplus p_2^*(\Theta_V)),\Z)\to  H_{2k+2}(\P(U)\times\P(V),\Z)\big)\ .$$
 This morphism is defined using the homology spectral sequence associated with the the sphere bundle $p_{UV}$ in the following way (see the proof of \cite[Theorem 9.3.3]{Sp}). The local coefficient system
$$\P(U)\times\P(V)\ni ([u],[v])\to H^2(\P(\C u\oplus\C v))
$$
comes with a natural section $\Sg$ defined by the fundamental classes of the complex lines    $\P(\C u\oplus\C v)$, hence this local coefficient system can be written formally as $\Z\Sg$. The morphism $\sigma_{UV}$ is induced by the composition
$$E^2_{2k, 2}=H_{2k}(\P(U)\times\P(V),\Z\Sg)\to E^\infty_{2k, 2}\to H_{2k+2}(\P(p_1^*(\Theta_U)\oplus p_2^*(\Theta_V)),\Z)\ .
$$
 We define the morphism 
$J_{UV}: H_{2k}(\P(U)\times\P(V),\Z)\to H_{2k+2}(\P(U\oplus V),\Z)$ by
\begin{equation}\label{HomoJoin}
J_{UV}= (q_{UV})_*\circ \sigma_{UV}\ .
\end{equation}

The morphism  $\sigma_{UV}$ can be geometrically understood as follows. Let $M$, $N$ be closed, connected oriented manifolds and $\varphi:M\to \P(U)$, $\psi: N\to \P(V)$ smooth maps. The manifold $P_{\varphi,\psi}:=\P(\varphi^*(\Theta_U)\oplus \psi^*(\Theta_V))$ is a $\P^1_\C$ fiber bundle over $M\times N$, hence it comes with a canonical orientation induced by the complex orientations of the fibers and and the product orientation of the basis. Then 
\begin{re} \label{sigma} The class
$\sigma_{UV}(\varphi_*[M]\otimes\psi_*[N])$ coincides with the image of the fundamental class $[P_{\varphi,\psi}]$ under the natural map $P_{\varphi,\psi}\to \P(p_1^*(\Theta_U)\oplus p_2^*(\Theta_V))$.
\end{re}

Using Remark \ref{sigma}  one can check that the definition (\ref{HomoJoin}) is compatible with set theoretical join map defined above  restricted to the set of algebraic subvarieties of $\P(U)\times\P(V)$. More precisely, for an algebraic subvariety $A\subset \P(U)\times\P(V)$, one has
$$J_{UV}([A])=[q_{UV}(p_{UV}^{-1}(A))]\ ,
$$
where $[\ ]$ stands for the fundamental class of an analytic cycle. This gives
\begin{re}\label{JoinOnGen}
Using the Küneth isomorphism $H^*(\P(U)\times\P(V),\Z)=H^*(\P(U),\Z)\otimes H^*(\P(V),\Z)$ and denoting by $u_k$, $v_k$, $w_k$ the fundamental class of the $k$-dimensional linear variety in $\P(U)$, $\P(V)$, $\P(U\oplus V)$ respectively,  we have
$$J_{UV}(u_k\otimes v_l)=w_{k+l+1}\ .
$$
\end{re}

We need a version of the join morphism which takes values in  $H_{*}(\P(U\oplus \bar V),\Z)$. We denote by $\bar V$ the $\C$-vector space obtained by endowing $V$ with the scalar multiplication $\zeta\cdot_{\bar V} v:=\bar\zeta\cdot_V v$.
 The identity map $V\to \bar V$ becomes anti-linear and will be denoted by $v\mapsto \bar v$.  For a subset $A\subset \P(U)\times\P(V)$ we define  its set theoretical join $J_{U}^{V}(A)\subset \P(U\oplus\bar V) $ by
\begin{equation}\label{NewJoin} J_{U}^{V}(A):=\bigcup_{([u],[v])\in A}\P(\C u\oplus\C \bar v)\subset \P(U\oplus\bar V)\ .
\end{equation}
Correspondingly, we replace in diagram (\ref{JoinDiagram}) 
\begin{itemize}
\item the projective line bundle $p_{UV}:\P(p_1^*(\Theta_U)\oplus p_1^*(\Theta_V))\to \P(U)\times\P(V)$ by  
$$p_U^V:\P(p_1^*(\Theta_U)\oplus p_1^*(\bar\Theta_V))\to \P(U)\times\P(V)\ ,$$
 where $\bar\Theta_V$ is the line bundle $V^*\times_{\bar\chi_0^{-1}}\C^*$ on $\P(V)$. Therefore 
 \begin{equation}\label{with-alpha'}\P(p_1^*(\Theta_U)\oplus p_1^*(\bar\Theta_V))=(U^*\times V^*)\times_{\alpha'}\P^1\ ,
\end{equation}
 where $\alpha':(\C^*\times\C^*)\times\P^1\to\P^1$ acts by:
 \begin{equation}\label{alpha'}
\alpha'((\zeta_1,\zeta_2),[z_0,z_1])= [\zeta_1^{-1}z_0, \bar\zeta_2^{-1}z_1]\ .
\end{equation} 
 \item the map $q_{UV}$ by  the map
$$q_U^V:\P(p_1^*(\Theta_U)\oplus p_1^*(\bar\Theta_V))\to \P(U\oplus\bar V)\ ,$$
$$q_U^V([(u,v),[z_0,z_1]]):=[z_0 u,  z_1 \bar v] \ \forall (u,v)\in U^*\times V^*  ,\ \forall [z_0,z_1]\in\P^1_\C\ .$$
\end{itemize}
In this way we obtain a homological version of (\ref{NewJoin}), namely
\begin{equation}\label{HomoJoinNew}
J_{U}^{V}=q_{U*}^V\circ \sigma_U^V:H_{2k}(\P(U)\times\P(V),\Z)\to H_{2k+2}(\P(U\oplus\bar V),\Z) \ ,
\end{equation}
where  
$$\sigma_{U}^V:H_{2k}(\P(U)\times\P(V),\Z)\to H_{2k+2}(\P(p_1^*(\Theta_U)\oplus p_2^*(\bar\Theta_V)),\Z)$$
 is obtained in the same way as    $\sigma_{UV}$, but using the Gysin exact sequence of the sphere bundle $p_U^V$.
 
Denoting by $I_U^V:\P(U)\times\P(V)\to \P(U)\times\P(\bar V)$ the homeomorphism given by $([u],[v])\mapsto ([u],[\bar v])$, we have the identity
\begin{equation}\label{RElJoins}
J_{U}^{V}=J_{U\bar V}\circ (I_U^V)_*\ ,
\end{equation}
which yields following version of Remark \ref{JoinOnGen}:
\begin{re}\label{NewJoinOnGen}
Denoting by $u_k$, $v_k$, $w_k$ the fundamental class of the $k$-dimensional linear variety in $\P(U)$, $\P(V)$,  $\P(U\oplus \bar V)$ respectively,  and using the Küneth isomorphism we have
$$J_{U}^{V}(u_k\otimes v_l)=(-1)^lw_{k+l+1}\ .
$$
\end{re}

\subsection{The closure in $\hat Q_f$ of an analytic set $Z\subset Q^s_f$ and the  boundary of its fundamental class} \label{boundaryZ}

We come back to the blow up flip passage $\hat Q_f$ associated with the data $(p':E'\to B, p'':E''\to B,f)$ as explained in section \ref{intro}.
 
Let $Z\subset Q^s_f$ be an analytic set of pure dimension $k>0$, and let $[Z]^{BM}\in H_{2k}^{BM}( Q^s_f, \Z)$ be the fundamental class of $Z$ in the Borel-Moore homology of $Q^s_f$. The long exact sequence associated to the open embedding $Q^s_f\hookrightarrow \hat Q_f$ contains the segment
$$\dots\to H_{2k}^{BM}(\hat Q_f, \Z) \to H_{2k}^{BM}( Q^s_f, \Z)\textmap{\delta} H_{2k-1}^{BM}( \partial\hat Q_f, \Z) \to\dots
$$
We will give an explicit formula, in complex geometric terms, for the   boundary 
$$\delta([ Z]^{BM})\in H_{2k-1}^{BM}( \partial\hat Q_f, \Z)=H_{2k-1}(\P(F'\oplus\bar F''), \Z)\ .$$

The problem is difficult because, a priori, one has no control on the closure of $Z$ in the manifold with boundary $\hat Q_f$.  Our result will implicitly give an explicit construction of this closure, and show that it belongs to a very special class of real analytic sets of  $\hat Q_f$.

 We recall that the circle $C=f^{-1}(0)$ has been endowed with the orientation defined by the complex orientation of $B$ and the orientation of $N_C^B$ defined by the differential $df$. Equivalently, $C$ is endowed with the boundary orientation of   $\partial f^{-1}([0,\infty))$. The bundles $F'$, $F''$ on $C$ are trivial hence, choosing a point $x\in C$ we get a canonical isomorphism  $H_*(\P(F'\oplus\bar F''), \Z)=H_*(C,\Z)\otimes H_*(\P(F'_{x}\oplus\bar F''_{x}), \Z)$. In particular, for $1\leq k\leq r'+r''$ one has
$$H_{2k-1}(\P(F'\oplus\bar F''), \Z)=[C]\otimes \Z w_{k-1}\ ,
$$
where $w_{k-1}$  denotes the standard generator of   $H_{2k-2}(\P(F'_{x}\oplus\bar F''_{x}), \Z)$.

Note first that, by definition, $Q^s_f$ contains two obvious complex submanifolds for which the problem can be solved easily: Putting $P':=\P(E')$, $P'':=\P(E'')$, $B_\pm:=(\pm f)^{-1}(0,\infty)$,   $E'_\pm:=\resto{E'}{B_\pm}$, $E''_\pm:=\resto{E''}{B_\pm}$, $P'_\pm:=\resto{P'}{B_\pm}$, $P''_\pm:=\resto{P''}{B_\pm}$ (as in section \ref{intro}), we see that $P'_-$, $P''_+$ are analytic subsets of $Q^s_f$.  
\begin{re}\label{P'P''} Under the assumptions and with the notations above one has 
$$\delta([P'_-]^{BM})=- [C]\otimes w_{r'-1} \hbox{ if }r'>0\ ,\ \delta([P''_+]^{BM})=(-1)^{r''-1} [C]\otimes w_{r''-1}  \hbox{ if }r''>0.
$$
\end{re}

The first (second) formula also applies in the special case when $r'=0$  ($r''=0$); in this special case we have $P'_-=\emptyset$, $P''_+=Q^s_f$ (respectively $P''_+=\emptyset$, $P'_-=Q^s_f$). 

 The union $P'_-\cup P''_+$  is disjoint, and should be regarded as the {\it known} part of $Q^s_f$. 
The main result of this section concerns the  more difficult case of a pure $k$-dimensional analytic set   $Z$ which is generically exterior  to this ``known" part of $Q_f^s$, in the sense that   $\dim(Z\cap (P'_-\cup P''_+))<k$.  

Let $E'_*$, $E''_*$ be the complements of the zero sections in the bundles $E'$, $E''$. Regarding $E'_*\times_ B E''_*$ as a {\it left} principal $\C^*\times\C^*$-bundle over $P'\times_B P''$, consider the associated line bundle given by
$$q:Q:=(E'_*\times_ B E''_*)\times_{(\chi_1^{-1}\chi_2^{-1})}\C\map P'\times_B P''\ ,
$$ 
where $\chi_i:\C^*\times\C^*\to\C^*$ is the character defined by the projection on the $i$-th factor. In other words, denoting by $\Theta'$, $\Theta''$ the tautological line bundles on the projective bundles $P'$, $P''$ and by $p_1:P'\times_BP''\to P'$, $p_2:P'\times_BP''\to P''$ the two projections,  one has $Q=p_1^*(\Theta')\otimes p_2^*(\Theta'')$ as line bundles over $P'\times_BP''$. The zero section  of this line bundle is a smooth hypersurface which can be identified with $P'\times_B P''$.

We denote  by $Q_0\subset E'\otimes E''$ the image of the natural map $E'\times_B E''\to  E'\otimes E''$. This map  is obviously $\C^*$-invariant. $Q_0$ is a locally trivial fiber bundle over $B$ whose fiber  over $x\in B$ is the cone over the image of $P'_x\times P''_x$ via the Segre embedding $P'_x\times P''_x\to \P(E'_x\otimes E''_x)$.  This cone   is singular when   $r'>1$ and $r''>1$, hence in this case $Q_0$ is a singular complex space  whose singular locus is the zero section $B\subset E'\otimes E''$. 
One has an obvious contraction $c:  Q\to Q_0$ which induces a biholomorphism $  Q\setminus (P'\times_B P'')\to Q_0\setminus B$, and contracts  {\it fiberwise} the divisor $P'\times_B P''$ to the zero section $B$ of $Q_0$.  In other words $c$ is the modification with center $B$, and the fiber product $P'\times_B P''$ is the exceptional divisor of the modification $c$. In the diagram below all polygons are commutative,  $j_1$, $j_2$ are biholomorphisms. The maps  $c_1$, $c_2$ are holomorphic modifications, in particular they are proper. The map $c_1$ contracts $P'_x\times P''_x$ to $P'_x$ when $x\in B_-$ and  contracts $P'_x\times P''_x$ to $P''_x$ when $x\in B_+$. The map $c_2$ contacts $P'_x\subset Q^s_f$ to $\{x\}$ when $x\in B_-$, and contacts $P''_x\subset Q^s_f$ to $\{x\}$ when $x\in B_+$.  Note that 
\begin{equation}\label{c2c1}
c_2\circ c_1=\resto{c}{Q\setminus (P'_C\times_C P''_C)}\ .
\end{equation}
$$
\begin{diagram}[h=6mm] 
&&  Q\setminus (P'\times_B P'') & \rTo^{j_1=} & Q^s_f\setminus (P'_-\cup P''_+) & &\rTo^{j_2=} & & Q_0\setminus B\\
&&&&&\luInto &&\ruInto\\
&&&&&& Z\setminus (P'_-\cup P''_+)& &  \\
&&\dInto &  & \dInto & &\dInto &&\dInto\\
&&&&&& Z\\
&&&&&\ldInto\\
&&  Q\setminus (P'_C\times_C P''_C) & \rTo^{c_1} &  Q^s_f &  & \rTo^{c_2}&&Q_0\setminus C
\end{diagram}$$
Using the notations introduced in sections \ref{RealBlowUpClass}, \ref{joinsection} we can state now
\begin{thry} \label{partialZ} 
With the notations above, suppose $r'>0$, $r''>0$, and let  $Z\subset Q^s_f$ be an analytic subset of pure dimension $k\geq 1$, such that $\dim(Z\cap (P'_-\cup P''_+))<k$.  Then
\begin{enumerate} 
\item The closure $\tilde Z$ of $j_1^{-1}(Z\setminus (P'_-\cup P''_+))$ in $Q$ is a pure $k$-dimensional analytic subset of $Q$ with   $\dim (\tilde Z\cap(P'\times_B P''))<k$, 
\item Choosing a point $x\in C$, the equality
$$\delta[ Z]^{BM}=[C]\otimes  J^{E''_{x}}_{E'_{x}} \big([\tilde Z]^{BM} \cdot (P'_{x}\times P''_{x})  \big)$$
holds in $ H_1(C,\Z)\otimes H_{2k-2}(\P(F'_{x}\oplus\bar F''_{x}),\Z)=H_{2k-1}(\P(F'\oplus\bar F''),\Z)$.
\end{enumerate} 
\end{thry}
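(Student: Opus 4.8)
The plan is to reduce the computation of $\delta[Z]^{BM}$ to the model situation treated in Proposition \ref{BoundForm}, by realizing a neighborhood of $\partial\hat Q_f$ inside $\hat Q_f$ as a spherical blow-up of a suitable ambient complex manifold along $C$, and then to identify the resulting intersection-and-Thom data with a fiberwise join. I would organize this in several steps.

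\smallskip

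\emph{Step 1: the analytic closure $\tilde Z$.} For the first assertion I would work in the singular model $Q_0\subset E'\otimes E''$ via the biholomorphism $j_2\circ j_1$. Over $B_+$ and $B_-$ the hypersurface $Z(m^f)$ meets each $\C^*$-orbit through $Z(m^f)\setminus C$ exactly once, and $c_2\circ c_1$ extends holomorphically to the proper modification $c$ with exceptional divisor $P'\times_B P''$. Since $Z\setminus(P'_-\cup P''_+)$ is a pure $k$-dimensional analytic subset of $Q^s_f\setminus(P'_-\cup P''_+)\cong Q_0\setminus B$ which, by hypothesis, has no component contained in the image of $B$ under $c_2$, its preimage under the proper modification $c$ has an analytic closure $\tilde Z$ in $Q$ (apply Remmert's proper mapping theorem together with the fact that $c$ is a biholomorphism off the exceptional divisor); purity and $\dim(\tilde Z\cap(P'\times_B P''))<k$ follow because the exceptional divisor is a hypersurface and no component of $\tilde Z$ can lie in it (such a component would map to a component of $Z$ of the wrong dimension inside $c_2^{-1}(C)=P'_-\cup P''_+$, contradicting the hypothesis). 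This also supplies the "implicit construction of the closure of $Z$ in $\hat Q_f$" promised in the text: the closure in $\hat Q_f$ is obtained from $\tilde Z$ by a spherical blow-up along $\tilde Z\cap(P'\times_B P'')$ restricted over $C$.

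\smallskip

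\emph{Step 2: localize near $C$ and apply the blow-up boundary formula.} By excision and the Borel-Moore Mayer-Vietoris argument of section \ref{RealBlowUpClass}, $\delta[Z]^{BM}$ depends only on the germ of $Z$ near $P'_C\times_C P''_C$, equivalently on $c^{-1}(Z)$ near the zero section $B\subset Q_0$ over $C$. Now $\hat Q_f$ near $\partial\hat Q_f$ is, by construction (formula (\ref{hatZ}) and the identification $\hat Q_f=\widehat{Z(m^f)}_C/S^1$), the spherical blow-up of the complex manifold $Q^s_f\cup(P'_C\times_C P''_C)$ — which I identify via $c$ with a neighborhood of $B\subset Q_0$ — along the complex submanifold $P'_C\times_C P''_C$, whose normal bundle restricted over $x\in C$ is $\P(E'_x\oplus\bar E''_x)$ minus its two distinguished sections... more precisely the spherical normal bundle of $P'_C\times_C P''_C$ inside the relevant ambient manifold has sphere bundle $\P(F'\oplus\bar F'')$ by the same computation that produced $\partial\hat Q_f=\P(F'\oplus\bar F'')$ in section \ref{intro}. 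Proposition \ref{BoundForm} then gives $\delta[Z]^{BM}=(-1)^{\ast}\,([\tilde Z]^{BM}\cdot(P'_C\times_C P''_C))\otimes\Sg_{N}$ for the appropriate sign $\ast=l+2k-m+1$, where the homological intersection is taken inside $Q$ (the intersection product is local, so one may compute it in $Q$ rather than in $Q_0$), and by Remark \ref{IntRem}(2) this intersection is the analytic cycle $\tilde Z\cap(P'_C\times_C P''_C)$.

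\smallskip

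\emph{Step 3: identify $\Sg_N$ with the join and collapse over $C$.} The class $([\tilde Z]^{BM}\cdot(P'_C\times_C P''_C))\otimes\Sg_N$ lives in $H_{2k-1}^{BM}$ of a sphere bundle over $P'_C\times_C P''_C$ whose typical fiber is the $S^1$-quotient sphere $S(F'_x\oplus\bar F''_x)/S^1=\P(F'_x\oplus\bar F''_x)$'s unit sphere... wait — here is the crux. The boundary $\partial\hat Q_f=\P(F'\oplus\bar F'')$ is not literally $S(N)$ but its $S^1$-quotient; equivalently the natural map $S(N)\to\P(F'\oplus\bar F'')$ is, over $C$, exactly the map $q_{E'_x}^{E''_x}\circ p_{E'_x}^{E''_x}{}^{-1}$ of diagram (\ref{JoinDiagram}) in its $\bar V$-version, restricted to the slice over $x\in C$. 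So pushing $\Sg_N$ forward along $S(N)\to\P(F'\oplus\bar F'')$ turns the "Thom sphere section" into precisely the homological join morphism $J^{E''_x}_{E'_x}$ of formula (\ref{HomoJoinNew}), applied fiberwise; and since $F'$, $F''$ are trivial over $C$, the class factors as $[C]\otimes J^{E''_x}_{E'_x}(\cdots)$ after integrating over the circle $C$ (using the orientation conventions fixed just before the theorem, which are designed so that the $[C]$ factor comes out with coefficient $+1$). Comparing with Remark \ref{P'P''}, whose signs I would recompute in this framework as a consistency check, pins down the overall sign $\ast$ and shows it is absorbed into the conventions, yielding the stated clean formula.

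\smallskip

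\emph{Main obstacle.} The hard part is Step 3 together with the bookkeeping of Step 2: one must show that the spherical blow-up $\widehat{Z(m^f)}_C/S^1$ really is the spherical blow-up of the ambient complex manifold along $P'_C\times_C P''_C$ followed by the identification of $S(N)/S^1$ with $\P(F'\oplus\bar F'')$ via the join diagram — i.e. that the join morphism $\sigma^V_U$ constructed from the Gysin sequence of $p^V_U$ is exactly the Thom-section map $\Sg_N$ pushed down through the $S^1$-quotient — and then to track every orientation (of $B$, of the complex fibers, of $C$ via $df$, of the normal bundles, the $(-1)^{kr}$ in the Thom isomorphism (\ref{Thom}), and the $(-1)^l$ discrepancy in Remark \ref{NewJoinOnGen}) so that all signs collapse to the displayed formula. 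I expect the cleanest route is to verify the formula first on fundamental classes of algebraic subvarieties $Z$ of the form (pullback of a cycle in $P'\times_B P''$), where both sides are computable by Remark \ref{NewJoinOnGen} and the Example after Proposition \ref{BoundForm}, fix the sign there, and then invoke naturality/linearity of all maps involved to extend to arbitrary analytic $Z$.
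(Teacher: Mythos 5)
Your overall skeleton does match the paper's (form the proper transform $\tilde Z$ in $Q$, apply Proposition \ref{BoundForm} to $(Q,W,[\tilde Z]^{BM})$ with $W=P'_C\times_C P''_C$, and identify the resulting class with a fiberwise join), but the pivot of your Step 2 is false, and what replaces it in the paper is precisely the technical heart you leave out. Near its boundary, $\hat Q_f$ is \emph{not} the spherical blow-up of an ambient complex manifold along $W$: if it were, $\partial\hat Q_f$ would be a sphere bundle over $W$, whereas $\partial\hat Q_f=\P(F'\oplus\bar F'')$ fibers over $C$ with fiber $\P^{r'+r''-1}_\C$ and is not an $S^2$-bundle over $P'_C\times_C P''_C$ except when $r'=r''=1$; moreover your candidate ambient space, a neighborhood of $B$ in $Q_0$, is singular along $B$ as soon as $r'>1$ and $r''>1$. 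Your Step 3 senses the problem, but the repair offered there is also inaccurate: $\partial\hat Q_f$ is not the $S^1$-quotient of $S(N_W^Q)$; the relevant map $S(N_W^Q)\to\P(F'\oplus\bar F'')$ is, over each $x\in C$, the blow-down $q_{E'_x}^{E''_x}$ of the $\P^1$-bundle $\P(p_1^*(\Theta_{E'_x})\oplus p_2^*(\bar\Theta_{E''_x}))$, not a quotient by a circle action. What the paper actually does is blow up $Q$ along $W$ and construct an explicit continuous surjection $\hat c:\hat Q_W\to\hat Q_f$ extending the modification $c_1$ (formula (\ref{hatc})), check equivariance/continuity, and then use the induced morphism of Borel--Moore long exact sequences together with $(c_1)_*[\tilde Z\setminus W]^{BM}=[Z]^{BM}$ to get $\delta[Z]^{BM}=(\partial\hat c)_*\delta([\tilde Z\setminus W]^{BM})$; only after that does the fiberwise factorization $\partial\hat c_x=q_x\circ G_x$ (with $g:S(\C\oplus\R)\to\P^1_\C$ equivariant for the actions $\beta$, $\alpha'$) turn the class $(\,\cdot\,)\otimes\Sg_{N_W^Q}$ into $J^{E''_x}_{E'_x}(\,\cdot\,)$. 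Without constructing such an extension and justifying the commutation of $\delta$ with this pushforward, the transfer of the boundary computation from $\hat Q_W$ to $\hat Q_f$ is unproved.

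There is also a gap in your Step 1: the closure of an analytic set across a hypersurface is not automatically analytic, and Remmert's proper mapping theorem plus ``biholomorphic off the exceptional divisor'' does not give it; your argument that no component of $\tilde Z$ lies in the divisor presupposes that $\tilde Z$ is already analytic. The paper first extends $j_2(Z\setminus(P'_-\cup P''_+))$ across the one-dimensional set $B$ inside $Q_0$ by the Remmert--Stein theorem, which is automatic only for $k\geq 2$; for $k=1$ it proves separately, via a properness argument with $c_2$ and the $0$-dimensionality of $Z\cap(P'_-\cup P''_+)$, that the closure does not contain $B$. Only then is $\tilde Z$ obtained as the strict transform under the modification $c$. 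Your proposal never performs this extension across $B$ and does not treat the $k=1$ case. (The sign bookkeeping you defer is, by contrast, harmless: in Proposition \ref{BoundForm} the exponent is $l+2k-m+1$ with $l=m-3$, which is even, so no sign appears, exactly as in the paper.)
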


The second statement gives an explicit formula for the boundary $\delta[  Z]^{BM}$ in complex geometric terms: this class is determined by the  homological intersection (see Definition \ref{HomInt}) of the Borel Moore class of $[\tilde Z]$ with the complex submanifold $P'_{x}\times P''_{x}$  of $Q$.
\begin{proof} (1) Since we assumed $\dim(Z\cap (P'_-\cup P''_+))<k$, it follows that $Z$ coincides with the closure of  $Z\setminus (P'_-\cup P''_+)$ in $Q^s_f$. We will apply Remmert-Stein theorem \cite[sec. 2 p. 354]{GR}   to the inclusion
$$j_2(Z\setminus (P'_-\cup P''_+)) \hookrightarrow Q_0\setminus B\ .
$$
This theorem applies automatically when $k\geq 2$ and shows that the closure $Z_0$ of $j_2(Z\setminus (P'_-\cup P''_+))$ in $Q_0$ is a complex analytic set of dimension $k$. In the limit case $k=1=\dim(B)$ we have to check that this closure $Z_0$  does not contain all of $B$. Since, by assumption, $\dim(Z\cap (P'_-\cup P''_+))<1$, it follows that $A:=Z\cap (P'_-\cup P''_+)$ is a 0-dimensional analytic subset  of $P'_-\cup P''_+$.
We claim that 
$$Z_0\cap (B\setminus C)\subset   c_2(A)\ ,$$
 which will complete the argument. To prove this inclusion, let  $x \in B\setminus C$ be the limit (in $Q_0$) of a sequence $(j_2(z_n))_{n\in \N}$ of points of $j_2(Z\setminus (P'_-\cup P''_+))$.   Since $c_2$ is proper,  a subsequence $(z_{n_k})_{k\in\N}$ of $(z_n)_{n\in \N}$ will converge to a point in $y\in P'_-\cup P''_+$ with $c_2(y)=x$. But $Z$ is closed in $Q^s_f$, hence we will have $y\in Z$, hence $x\in c_2(A)$ as claimed.   

Since $c$ is a modification, the closure $\tilde Z$ of $c^{-1}(Z_0\setminus B)=j_1^{-1}(Z\setminus (P'_-\cup P''_+))$ in $Q$ is an analytic set.  This analytic set is the proper transform $\tilde Z$ of $Z_0$ in $Q$, and it   is pure $k$-dimensional, because $Z_0\setminus B$ has this property.    Note also that $\dim(\tilde Z\cap (P'\times_B P''))<k$ as claimed, because the complement of  $\tilde Z\cap (P'\times_B P'')$ in $\tilde Z$ (which is obviously Zariski open in $\tilde Z$) is dense in $\tilde Z$. 
\vspace{2mm}\\
(2) Denote by $P'_C$, $P''_C$ the restrictions of $P'$, $P''$ to $C$, and put $W:=P'_C\times_C P''_C$ regarded a submanifold of $Q$ of real codimension 3.   As a subset of  the smooth hypersurface $P'\times_B P''\subset Q$ (the  zero section of $Q$), $W$ is the  smooth real hypersurface    defined by the equation $f\circ \pi=0$, where $\pi:P'\times_B P''\to B$ is the obvious projection.  The idea of the proof is to construct a {\it continuous} extension   $\hat c:\hat Q_W\to \hat Q_f$ of $c_1$  and to make use of Proposition \ref{BoundForm}  applied to   $(Q,W,[\tilde Z]^{BM})$ in order to compute the boundary of   $[\tilde Z\setminus W]^{BM}$ in the homology of $\partial\hat Q_W$. 
$$
\begin{diagram}[h=6mm]
&&  &  & \tilde Z&\rTo &Z_0&& \\
&&&\ldInto&&&&\rdInto\\
W=P'_C\times_C P''_C&\rInto & Q& & &\rTo^c  & &     &  Q_0\\
\uTo &&\uTo &   &&  &&  & \\
\partial \hat Q_{W}= S(N_W^Q)&\rInto& \hat Q_{W}&\rTo^{\hat c}&\hat Q_f&\lInto &\P(F'\oplus\bar F'')=&\partial \hat Q_f&  
\end{diagram}
$$
Note first that $W$ is the zero locus of the section $(\theta_Q,f\circ\pi\circ q)$ in $q^*(Q)\oplus\underline{\R}$, where $\theta_Q$ is the tautological section of the complex line bundle $q^*(Q)$ over $Q$. Therefore, as explained  section \ref{BlUpSection}, the blow up $\hat Q_W$ can be identified with the submanifold
$$\{((y,t),\rho)\in S(Q\oplus \underline{\R})\times[0,\infty)|\ \rho t= f( \pi(q(y))) \}
$$
of  $S(Q\oplus \underline{\R})\times[0,\infty)$, and, via this identification, the contraction map  $\hat Q_W\to Q$ is given by $((y,t),\rho)\mapsto \rho y$. Note that the sphere bundle  $S(Q\oplus \underline{\R})$ is the associated bundle
$$(S(E')\times_B S(E''))\times_{\beta} S(\C\oplus\R)\ ,
$$
where $S(E')\times_B S(E'')$ is regarded as a left principal $S^1\times S^1$-bundle over 
$P'\times_B P''$, and the action $\beta:(S^1\times S^1)\times S(\C\oplus\R)\to S(\C\oplus\R)$ is given by 
\begin{equation}\label{beta}
\beta((\zeta_1,\zeta_2),(u,t))= (\zeta_1^{-1}\zeta_2^{-1} u,t)\ .
\end{equation}
Using formula (\ref{hatZ}) of section \ref{BlUpSection}, one has an  identification
$$\hat Q_f=\big\{ ([y',\bar y''],\rho)\in    \{{S(E'\oplus \bar E'')}/{S^1}\}\times[0,\infty)|\ m^f(\rho y', \rho y'')=0\big\}\ ,
$$
and, via this identification, the contraction map  $\hat Q_f\to Q_f$ is   $([y',y''],\rho)\to [\rho y', \rho y'']$. 

We define $c_f:\hat Q_W\to \hat Q_f$ by
\begin{equation}\label{hatc} \hat c([(a',a''),(u,t)],\rho):=\left\{
\begin{array}{ccr}
\big(\frac{1}{\sqrt{2}}\big[(1-t)^{\frac{1}{2}}\ a', (1-t)^{-\frac{1}{2}}\bar u \ \bar a''\big]  ,\sqrt{2\rho}\big) &{\rm  for} & t\ne\phantom{-}1 \\
\big(\frac{1}{\sqrt{2}}\big[u(1+t)^{-\frac{1}{2}}\ a', 
(1+t)^{\frac{1}{2}}\ \bar a''\big],\sqrt{2\rho}\big)
&{\rm for }& t\ne -1
\end{array} 
\right. \ ,
\end{equation}
for any $(a',a'')\in S(E')\times_B S(E'')$, $(u,t)\in S(\C\oplus\R)$, $\rho\in[0,\infty)$. It is easy to see that $\hat c$ is well defined, takes values in $\hat Q_f$, and is a continuous extension of the holomorphic modification $c_1$. 
 
The boundary of $[\tilde Z\setminus W]^{BM}$ in $H_{2k-1}(\partial \hat Q_W,\Z)$ can be computed easily using Proposition \ref{BoundForm}, and the result is:  
\begin{equation}\label{FirstFormula}\delta([\tilde Z\setminus W]^{BM})=([\tilde Z]^{BM}\cdot W)\otimes \Sg_{N_W^Q}\ ,
\end{equation}
where $[\tilde Z]^{BM}\cdot W\in H_{2k-3}(W,\Z)$ is given by Definition \ref{HomInt}, and $\Sg_{N_W^Q}$ is the section   defined by the fundamental classes of the spheres  $S(N^Q_{W,w})$, $w\in W$.  In this formula $N_W^Q$ is oriented using the decomposition  $N_W^Q=\resto{Q}{W}\oplus \underline{\R}$, and the complex orientation of the complex line bundle $Q$. Correspondingly,  $W$ is oriented using this orientation of $N_W^Q$, and the complex orientation of the manifold $Q$. This orientation of $W$ agrees with its orientation as $\P^{r'}_\C\times \P^{r''}_\C$-bundle over the oriented circle $C$.

Under our assumptions $W$ is a $\P^{r'}_\C\times \P^{r''}_\C$ bundle over $C$ (which is a  circle), hence any cohomology class $b$  of even degree  of $W$ can be written as $1\otimes j_{x}^*(b)$, where $j_{x}:P'_{x}\times P''_{x}\hookrightarrow W$ denotes the inclusion map. Therefore, for any Borel-More homology class $c$ of even degree of $Q$, denoting by $b$ its Poincaré dual, we get
$$c\cdot W=j_W^*(b)\cap [W]=\big(1\otimes j_{x}^*j_W^*(b)\big)\cap \big([C]\otimes [P'_{x}\times P''_{x}]\big)=[C]\otimes \big (c\cdot (P'_{x}\times P''_{x})\big)\ . 
$$
In particular
$$[\tilde Z]^{BM}\cdot W=[C]\otimes \big([\tilde Z]^{BM}\cdot(P'_{x}\times P''_{x})\big)\ ,
$$
so that, by (\ref{FirstFormula}) we get
\begin{equation}\label{Second}\delta([\tilde Z\setminus W]^{BM})=[C]\otimes \big([\tilde Z]^{BM}\cdot(P'_{x}\times P''_{x})\big)\otimes \Sg_{N_W^Q}\ .
\end{equation}
The morphism of triples $(\hat Q_W\setminus \partial \hat Q_W,\hat Q_W,\partial \hat Q_W)\to (\hat Q_f\setminus\partial \hat Q_f, \hat Q_f, \partial \hat Q_f)$ induced by $\hat c$ gives a morphism of Borel-Moore homology long  exact sequences. Since %
$$\resto{c_1}{\tilde Z\setminus W}:\tilde Z\setminus W\to Z$$
is a modification, we have $(c_{1*})([\tilde Z\setminus W]^{BM})=[Z]^{BM}$, which implies
$$\delta([Z]^{BM})=(\partial \hat c)_*(\delta([\tilde Z\setminus W]^{BM}))\ .
$$
The two boundaries $\partial \hat Q_W$, $\partial \hat Q_f$ are fiber bundles over $C$, and the map 
$$\partial \hat c_{x}:(S(E'_{x})\times  S(E''_{x}))\times_{\beta} S(\C\oplus\R) \to \P(E'_{x}\oplus\bar E''_{x})$$
induced by $\partial \hat c$ is a bundle morphism over $C$. Therefore, writing   $\delta([Z]^{BM})$ as $\delta([Z]^{BM})=[C]\otimes U$ for a class $U\in H_{2k-2}(\P(E'_{x}\oplus\bar E''_{x}),\Z)$ we get
$$U=(\partial \hat c_{x})_*\big(\big([\tilde Z]^{BM}\cdot(P'_{x}\times P''_{x})\big)\otimes \Sg_{N_W^Q}\big)\ .
$$

Therefore to complete the proof it suffices to show that
\begin{equation}\label{toshow}
(\partial \hat c_{x})_*\big(\big([\tilde Z]^{BM}\cdot(P'_{x}\times P''_{x})\big)\otimes \Sg_{N_W^Q}\big)=J^{E''_{x}}_{E'_{x}} \big([\tilde Z]^{BM} \cdot (P'_{x}\times P''_{x})  \big)\ .
\end{equation}

Taking into account (\ref{hatc}) the map $\partial \hat c_{x}$ decomposes as 
$\partial \hat c_{x}=q_{x}\circ G_{x}$,
 where 
\begin{itemize}
\item $G_{x}:(S(E'_{x})\times  S(E''_{x}))\times_{\beta} S(\C\oplus\R) \to (S(E'_{x})\times  S(E''_{x}))\times_{\alpha'} \P^1_\C$
is the bundle isomorphism induced by the  diffeomorphism $g:S(\C\oplus\R)\to \P^1_\C$
given  by 
$$g(u,t)=\left\{
\begin{array}{ccl}
\left[(1-t)^{\frac{1}{2}},  { \bar u}{(1-t)^{-\frac{1}{2}}} \right] &{\rm  for} & t\ne \phantom{-} 1 \vspace{1mm}\\
\left[ {u}{(1+t)^{-\frac{1}{2}}} , 
(1+t)^{\frac{1}{2}}\right]
&{\rm for }& t\ne -1
\end{array}
\right.   \forall (u,t)\in S(\C\oplus\R)\ .
$$
This diffeomorphism is orientation preserving if $S(\C\oplus\R)$ is endowed with the standard orientation (as boundary of $D^3$) and $\P^1_\C$ with the complex orientation; it maps $(0_\C,1)$ to $\infty\in\P^1_\C$, $(0_\C,-1)$ to $0\in\P^1_\C$, and is equivariant with respect to the $S^1\times S^1$-actions $\beta$, $\alpha'$ given by the formulae (\ref{beta}), (\ref{alpha'}).
\item $q_{x}:(S(E'_{x})\times  S(E''_{x}))\times_{\alpha'} \P^1_\C\to \P(E'_{x}\oplus\bar E''_{x})$ is defined by
$$q_{x}([(a',a''),[z_0,z_1]]):=[z_0 a', z_1 \bar a'']\ .
$$
\end{itemize}
It suffices to recall that, by (\ref{with-alpha'}),  one has
$$(S(E'_{x})\times  S(E''_{x}))\times_{\alpha'} \P^1_{\C}=
\P(p_1^*(\Theta_{E'_{x}})\oplus p_2^*(\bar\Theta_{E''_{x}})\ ,$$
 and to note that, via this identification, one has  $q_{x}=q_{E'_{x}}^{E''_{x}}$.
\end{proof}
Using our explicit formula for the join morphism (see Remark \ref{NewJoinOnGen}), Proposition \ref{partialZ}, and Remark \ref{P'P''}, we get

\begin{co}\label{partialQ} With the notations introduced above, suppose $r'+r''>0$. The boundary of the fundamental class   $[Q^s_f]^{BM}$ of the whole quotient $Q^s_f$ is given by 
$$\delta([Q^s_f]^{BM})=(-1)^{r''-1}[C]\otimes  w_{r'+r''-1}=(-1)^{r''-1}[C]\otimes  [\P(E'_{x}\oplus\bar E''_{x})]\ .
$$
\end{co}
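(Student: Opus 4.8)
The plan is to derive Corollary \ref{partialQ} from Theorem \ref{partialZ} by taking $Z=Q^s_f$, the whole quotient, together with the explicit formula for the join morphism on generators given in Remark \ref{NewJoinOnGen}. First I would check that the hypotheses of Theorem \ref{partialZ} are met for $Z=Q^s_f$: we need $\dim(Z\cap(P'_-\cup P''_+))<k=r'+r''$, which holds since $P'_-\cup P''_+$ is a proper analytic subset of $Q^s_f$ (it is the preimage of the divisor $f\circ\pi=0$ together with codimension considerations — more precisely, $P'_-$ has dimension $r'-1+1=r'<r'+r''$ when $r''>0$, and similarly $P''_+$, and when one of $r',r''$ vanishes one uses the degenerate case noted right after Remark \ref{P'P''}). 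One must however handle separately the cases $r'=0$ and $r''=0$, where $Q^s_f$ coincides with $P''_+$ or $P'_-$ respectively; then the statement reduces directly to Remark \ref{P'P''}, and the formula $(-1)^{r''-1}[C]\otimes w_{r'+r''-1}$ should be checked to match the sign there (for $r'=0$, $Q^s_f=P''_+$ and $\delta=(-1)^{r''-1}[C]\otimes w_{r''-1}$, which is exactly the claimed expression; for $r''=0$, $Q^s_f=P'_-$ and $\delta=-[C]\otimes w_{r'-1}=(-1)^{-1}[C]\otimes w_{r'-1}$, again matching with $r''=0$).

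The main computation is the generic case $r'>0$, $r''>0$. Here $\tilde Z$, the closure of $j_1^{-1}(Q^s_f\setminus(P'_-\cup P''_+))$ in $Q$, is all of $Q$ itself, since $j_1$ is a biholomorphism $Q\setminus(P'\times_B P'')\to Q^s_f\setminus(P'_-\cup P''_+)$ and the closure of $Q\setminus(P'\times_B P'')$ in $Q$ is $Q$. So $[\tilde Z]^{BM}=[Q]^{BM}$ and, by the Example following Proposition \ref{BoundForm} (or directly since the Poincaré dual of $[Q]^{BM}$ is $1\in H^0(Q)$), we have $[\tilde Z]^{BM}\cdot(P'_x\times P''_x)=[P'_x\times P''_x]$, the fundamental class of the fibre $\P^{r'-1}_\C\times\P^{r''-1}_\C$. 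Under the Künneth isomorphism this is $u_{r'-1}\otimes v_{r''-1}$ in the notation of Remark \ref{NewJoinOnGen}. Then Theorem \ref{partialZ}(2) gives
$$\delta([Q^s_f]^{BM})=[C]\otimes J^{E''_x}_{E'_x}(u_{r'-1}\otimes v_{r''-1})=[C]\otimes (-1)^{r''-1} w_{(r'-1)+(r''-1)+1}=(-1)^{r''-1}[C]\otimes w_{r'+r''-1}\ ,$$
where the middle equality is precisely Remark \ref{NewJoinOnGen} with $l=r''-1$. Finally, $w_{r'+r''-1}$ is by definition the fundamental class of a hyperplane $\P^{r'+r''-2}_\C$ in $\P^{r'+r''-1}_\C=\P(E'_x\oplus\bar E''_x)$; since the fibre of $\P(F'\oplus\bar F'')$ over $x$ is exactly $\P(E'_x\oplus\bar E''_x)$ of complex dimension $r'+r''-1$, the top-dimensional generator $w_{r'+r''-1}$ coincides with the fundamental class $[\P(E'_x\oplus\bar E''_x)]$, giving the second displayed equality in the statement.

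I expect the only real obstacle to be bookkeeping: making sure the identification $[\tilde Z]^{BM}\cdot(P'_x\times P''_x)=[P'_x\times P''_x]$ is set up with the correct orientation convention (the homological intersection of a full-dimensional Borel–Moore class with a complex submanifold, via Remark \ref{IntRem}(2), is just the fundamental class of the submanifold, with no sign), and tracking that the sign $(-1)^{r''-1}$ from Remark \ref{NewJoinOnGen} is the \emph{only} sign that survives — in particular that the homological intersection contributes no extra sign because $P'_x\times P''_x$ is a complex submanifold of the complex manifold $Q$ and $[\tilde Z]^{BM}=[Q]^{BM}$. I would also double-check the consistency between the generic-case answer and the degenerate cases $r'=0$ or $r''=0$ recorded above, which serves as a useful sanity check on the sign.
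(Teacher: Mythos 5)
Your proof is correct and is essentially the paper's own argument: apply Theorem \ref{partialZ} with $Z=Q^s_f$ (so $\tilde Z=Q$ and $[\tilde Z]^{BM}\cdot(P'_{x}\times P''_{x})=u_{r'-1}\otimes v_{r''-1}$), conclude via Remark \ref{NewJoinOnGen}, and treat the degenerate cases $r'=0$, $r''=0$ directly by Remark \ref{P'P''}, exactly as the paper indicates. The only slip is terminological: by the paper's convention $w_{r'+r''-1}$ is the fundamental class of the $(r'+r''-1)$-dimensional linear variety, i.e.\ of the whole fibre $\P(E'_{x}\oplus\bar E''_{x})$, not of a hyperplane $\P^{r'+r''-2}_\C$ — which is what your final conclusion correctly uses anyway.
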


Note that this formula holds even when $r''=0$, or $r'=0$, but in these cases one uses Remark \ref{P'P''}. When $r''=0$ ($r'=0$), one has $Q^s_f= P'_-$ (respectively $Q^s_f= P''_+$).
%
%
%
\begin{ex}\label{ex3}
Suppose $r''=1$.  Then
\begin{equation}\label{example1}
\delta([P'_-]^{BM}))=-[C]\otimes w_1\ ,\ \delta([P''_+]^{BM}))=[C]\otimes 1\ ,\ \delta([Q^s_f]^{BM})=[C]\otimes w_2
\end{equation}
In this case  $Q_0$ can be identified with $E'\otimes E''$ (hence is smooth), $Q$ is the blow up of $Q_0$ at the curve $B$, $Q^\st_f$ is the blow up of $Q_0\setminus C$ at the curve $B_-$, $c:Q\to Q_0$, $c_2:Q^\st_f\to Q_0\setminus C$ are  the corresponding contraction maps, the projective bundles $P'$,  $P'_-$ are the corresponding exceptional divisors, and $P''_+=B_+$.
Let $Z\subset Q^s_f$ be an {\it irreducible} hypersurface. Taking into account the position of $Z$ with respect to $P'_-$,   the following cases can occur:
\begin{enumerate}[1.]
\item $Z=P'_-$ (equivalently $\tilde Z=P'$). In this case $\delta([Z]^{BM})=-[C]\otimes w_1$.
\item $Z\ne P'_-$ (equivalently $\tilde Z\ne P'$).
 In this, denoting by $\deg(Z\cap P'_x)$ the degree of $ Z\cap P'_x$ in the projective space  $P'_x$ for generic $x\in B_-$, we'll have
 \begin{equation}\label{example2}
\delta([Z]^{BM})=\deg(Z\cap P'_x) [C]\otimes w_{r'-1}\ .
\end{equation}
  Taking into account the position of the divisor $c(\tilde Z)$ with respect to $B$ (in $Q_0$), two subcases can occur:
\begin{enumerate} 
\item $c(\tilde Z)\cap B$ is 0-dimensional. This implies that $\tilde Z$ does not intersect the generic fiber of $P'$, hence $\deg(Z\cap P'_x)=0$, and $\delta([Z]^{BM})=0$.
\item $c(\tilde Z)$ contains $B$. This implies that $\tilde Z$ intersects all the fibers of the projective bundle $P'$. Since we are in case 2. the intersection with the generic fiber must be transversal, hence in this case we have $\deg(Z\cap P'_x)>0$. Note that in this case  $Z$ contains $P''_+=B_+$.\end{enumerate}
\end{enumerate}
\end{ex}

\section{Applications}

The following simple result shows how Theorem \ref{partialZ} is applied.

\begin{pr}\label{identity}
Let $(X,g)$ be a Gauduchon surface with $b_1(X)=1$, $p_g(X)=0$, $(E,h)$ a Hermitian rank 2-bundle on $X$ with $c_1(E)\not\in 2H^2(X,\Z)$, and ${\cal D}$ a holomorphic structure on $\det(E)$. Suppose that
\begin{enumerate}[1.]
\item All reductions in ${\cal M}^\pst$ are regular,
\item The moduli space ${\cal M}^\pst$ associated with the data $(X,g,E,h,{\cal D})$ is compact. 
\end{enumerate}
For any Donaldson cohomology class $\nu\in H^{k-1}({\cal B}^*,\Q)$ and  any $\xi\in H_k^{BM}({\cal M}^\st,\Q)$ we have
$$\sum_{\lambda\in {\cal D}ec(E)} \langle \resto{\nu}{{\cal P}_\lambda},\delta_\lambda\xi\rangle =0\ ,
$$
where $\delta_\lambda\xi$ denotes the homological boundary of $\xi$ in $H_{k-1}({\cal P}_\lambda,\Q)$.\end{pr}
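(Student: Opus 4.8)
The plan is to realize the sum as the evaluation of a single global cohomology class on a Borel--Moore cycle that bounds, which forces it to vanish. To this end I would first pass to the simultaneous spherical blow up $\hat{\cal M}^\pst$ of ${\cal M}^\pst$ along the family $\{C_\lambda\}_{\lambda\in{\cal D}ec(E)}$ of circles of reductions. Under our hypotheses ${\cal M}^\pst$ is compact, hence ${\cal D}ec(E)$ is finite and the circles $C_\lambda$ are pairwise disjoint; since moreover all reductions are regular, the construction of \cite{Te5} applies along each of them and yields a compact space $\hat{\cal M}^\pst$ which is a manifold with boundary near $\partial\hat{\cal M}^\pst=\bigsqcup_{\lambda\in{\cal D}ec(E)}{\cal P}_\lambda$ and whose interior is identified, through the contraction map, with ${\cal M}^\st$. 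Denote by $i\colon\partial\hat{\cal M}^\pst\hookrightarrow\hat{\cal M}^\pst$ the (closed) boundary inclusion. The Borel--Moore long exact sequence of the pair $\big({\cal M}^\st\hookrightarrow\hat{\cal M}^\pst,\ \partial\hat{\cal M}^\pst\big)$ contains the segment
$$H_k^{BM}(\hat{\cal M}^\pst,\Q)\map H_k^{BM}({\cal M}^\st,\Q)\textmap{\delta}H_{k-1}^{BM}(\partial\hat{\cal M}^\pst,\Q)\textmap{i_*}H_{k-1}^{BM}(\hat{\cal M}^\pst,\Q)\ ,$$
in which $H_{k-1}^{BM}$ of the two compact spaces coincides with ordinary homology. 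Since $\partial\hat{\cal M}^\pst=\bigsqcup_\lambda{\cal P}_\lambda$ is the decomposition into open-and-closed pieces we have $H_{k-1}^{BM}(\partial\hat{\cal M}^\pst,\Q)=\bigoplus_\lambda H_{k-1}({\cal P}_\lambda,\Q)$, and by excision (the ${\cal P}_\lambda$ have pairwise disjoint neighbourhoods, and near ${\cal P}_\lambda$ the space $\hat{\cal M}^\pst$ agrees with the one-circle blow up $\hat{\cal M}^\pst_\lambda$) the $\lambda$-component of $\delta\xi$ is exactly the homological boundary $\delta_\lambda\xi$ of the statement; thus $\delta\xi=\sum_\lambda\delta_\lambda\xi$.

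Next I would extend the Donaldson class $\nu$ over $\hat{\cal M}^\pst$. This is the place where the regularity hypothesis is used: as recalled in the introduction and established in \cite{Te5}, \cite{Te2} (compare Theorem~\ref{hol-model}(4)), for each $\lambda$ the class $\nu$ extends from ${\cal M}^\st$ across ${\cal P}_\lambda$ — that is, to an open neighbourhood ${\cal O}_\lambda$ of ${\cal P}_\lambda$ in $\hat{\cal M}^\pst$ — with restriction to ${\cal P}_\lambda$ equal to the class $\resto{\nu}{{\cal P}_\lambda}$ appearing in the statement. Shrinking the ${\cal O}_\lambda$ so that they are pairwise disjoint and satisfy ${\cal O}_\lambda\cap{\cal M}^\st={\cal O}_\lambda\setminus{\cal P}_\lambda$, the chosen extensions and $\nu$ agree, by construction, on the overlap ${\cal M}^\st\cap\bigsqcup_\lambda{\cal O}_\lambda$; hence the Mayer--Vietoris sequence of the open cover $\hat{\cal M}^\pst={\cal M}^\st\cup\bigsqcup_\lambda{\cal O}_\lambda$ produces a class $\hat\nu\in H^{k-1}(\hat{\cal M}^\pst,\Q)$ restricting to $\nu$ on ${\cal M}^\st$ and with $i^*\hat\nu=\bigoplus_\lambda\resto{\nu}{{\cal P}_\lambda}$.

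Now the projection formula for $i$, combined with the two preceding steps, gives
$$\sum_{\lambda\in{\cal D}ec(E)}\big\langle\resto{\nu}{{\cal P}_\lambda},\ \delta_\lambda\xi\big\rangle=\big\langle i^*\hat\nu,\ \delta\xi\big\rangle=\big\langle\hat\nu,\ i_*(\delta\xi)\big\rangle\ ,$$
and $i_*(\delta\xi)=0$ in $H_{k-1}^{BM}(\hat{\cal M}^\pst,\Q)=H_{k-1}(\hat{\cal M}^\pst,\Q)$ by exactness of the sequence above, so the sum vanishes. (This is the homological shadow of Stokes' theorem: for $\xi=[{\cal M}^\st]^{BM}$ and $\nu$ represented by a closed differential form the identity reads $\int_{\partial\hat{\cal M}^\pst}\hat\nu=\int_{\hat{\cal M}^\pst}d\hat\nu=0$.) The only ingredient that is not routine Borel--Moore bookkeeping is the construction of the global extension $\hat\nu$, and I expect this — rather than the final exact-sequence argument — to be the main point; note also that, since the right-hand side of the identity is $0$, no orientation signs need be tracked anywhere.
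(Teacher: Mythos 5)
Your argument is essentially the paper's own proof: blow up all reduction circles at once, note that $\hat{\cal M}^\pst$ and $\partial\hat{\cal M}^\pst=\coprod_\lambda{\cal P}_\lambda$ are compact so Borel--Moore homology is ordinary homology, use the long exact sequence of ${\cal M}^\st\hookrightarrow\hat{\cal M}^\pst$ to get $i_*(\delta\xi)=0$, and pair with the class extending $\nu$. The only difference is that you spell out, via Mayer--Vietoris, the global extension $\hat\nu$ whose existence the paper simply takes as given (writing $\resto{\nu}{\hat{\cal M}^\pst}$, the extendability across the exceptional loci ${\cal P}_\lambda$ being known from \cite{Te2}, \cite{Te5}), so this is a harmless elaboration rather than a different route.
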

\begin{proof} Denote by $\hat {\cal M}^\pst$ the space obtained by blowing up all circles $C_\lambda$ in ${\cal M}^\pst$.
Using the   Borel-Moore long exact sequence
associated with the open embedding ${\cal M}^\st \hookrightarrow \hat {\cal M}^\pst$ \cite[Section 1.6]{BH}, and denoting by $j:\partial\hat {\cal M}^\pst\hookrightarrow \hat {\cal M}^\pst$ the inclusion map, we get $j_*(\delta \xi)=0$. Therefore
$$\langle \resto{\nu}{\partial \hat {\cal M}^\pst}, \delta \xi\rangle=\langle \resto{\nu}{\hat {\cal M}^\pst}, j_*(\delta \xi)\rangle=0\ .
$$
Here we used essentially that $\partial \hat {\cal M}^\pst$, $\hat {\cal M}^\pst$ are compact, hence their Borel-Moore homology coincides with their usual homology.  It suffices to take into account that $\partial \hat {\cal M}^\pst=\coprod_{\lambda\in{\cal D}ec(E)} {\cal P}_\lambda$.
\end{proof}

We can now apply our results to the geometric problem formulated in the introduction concerning the Zariski closures of the extension families ${\cal P}^c_\varepsilon$.  We are interested in the moduli space intervening in our program for proving the existence of curves on class VII surfaces. The following result will be used in \cite{Te6}. We use the notations introduced in section \ref{intro}. 

\begin{pr}\label{application}
Let $X$ be a minimal class VII surface with $H_1(X,\Z)\simeq\Z$ and $b_2(X)=3$, which is not an Enoki surface. Let $g$  be a Gauduchon metric on $X$ with $\deg_g({\cal K})<0$. Then
\begin{enumerate}[1.]
\item The component  ${\cal M}_0\in\pi_0({\cal M}^\pst)$ of ${\cal P}^0_\varepsilon$   contains all four circles of reductions,
\item Let $i\in\Ig$. The Zariski closure of ${\cal P}^{e_i}_\varepsilon$ in ${\cal M}^\st$ has pure dimension 2, does not intersect ${\cal P}^0_\varepsilon$,  and contains the curve ${\cal P}^{e_I}_\varepsilon$ for a subset $I\subset\Ig$ of cardinal 2.
\end{enumerate}
\end{pr}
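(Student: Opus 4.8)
The plan is to apply Proposition \ref{identity} twice: once to the Borel--Moore fundamental class of the connected component ${\cal M}_0$ (more precisely of ${\cal M}_0\cap{\cal M}^\st$) for statement 1., and once to the Borel--Moore fundamental class of the Zariski closure of ${\cal P}^{e_i}_\varepsilon$ for statement 2.; in each case the boundary terms $\delta_\lambda$ in \eqref{id-formula} are to be evaluated locally near each reduction circle through the holomorphic model Theorem \ref{hol-model}, using Corollary \ref{partialQ} in the first case and Remark \ref{P'P''} and Example \ref{ex3} in the second. For $I\subsetneq\Ig$ write $\lambda_I:=\{e_I,e_{\bar I}\}$; since $b_2(X)=3$ the four reduction circles are $C_{\lambda_\emptyset},C_{\lambda_{\{1\}}},C_{\lambda_{\{2\}}},C_{\lambda_{\{3\}}}$, and with the representative $c=e_I$ one has $r'_{e_I}=|\bar I|$, $r''_{e_I}=|I|$, so the fibre of ${\cal P}_{\lambda_I}$ is $\P^2_\C$ in all four cases, while $r''_{e_\emptyset}=0$ and $r'_{e_{\{i\}}}=2$, $r''_{e_{\{i\}}}=1$.

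For statement 1., put $\xi:=[{\cal M}_0\cap{\cal M}^\st]^{BM}\in H_6^{BM}({\cal M}^\st,\Q)$. Then $\delta_\lambda\xi=0$ unless $C_\lambda\subset{\cal M}_0$, and for $C_\lambda\subset{\cal M}_0$ the germ of ${\cal M}_0\cap{\cal M}^\st$ along $C_\lambda$ is, by Theorem \ref{hol-model}, biholomorphic to the germ of $Q^s_{f_c}$ along $\partial\hat Q_{f_c}$ ($c=e_{I_\lambda}$), so Corollary \ref{partialQ} yields $\delta_\lambda\xi=(-1)^{|I_\lambda|-1}[{\cal P}_\lambda]$. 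Applying Proposition \ref{identity} to this $\xi$ ($k=6$) and to $\nu:=\nu(h)\cup\nu(u)\in H^5({\cal B}^*,\Q)$ with $\langle\gamma_X,h\rangle=1$ and $u\in H_2(X,\Q)$ arbitrary, and inserting Theorem \ref{hol-model}(4) (which gives $\langle\nu|_{{\cal P}_\lambda},[{\cal P}_\lambda]\rangle=\tfrac12\langle2c-c_1(E),u\rangle$, using $\langle\delta(h),[C_\lambda]\rangle=1$ and $\langle W_c^2,[\P^2_\C]\rangle=1$), the identity \eqref{id-formula} becomes $\sum_{I:\,C_{\lambda_I}\subset{\cal M}_0}v_I=0$ in $H^2(X,\Q)$, where $v_I:=(-1)^{|I|-1}(e_I-e_{\bar I})$. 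In the Donaldson basis $v_\emptyset=(1,1,1)$ and $v_{\{j\}}$ has $j$-th coordinate $1$ and the others $-1$; the four of them sum to zero, and one checks directly that no proper subset containing $\emptyset$ does. Since $C_{\lambda_\emptyset}\subset\bar{\cal P}^0_\varepsilon\subset{\cal M}_0$, all four circles must lie in ${\cal M}_0$.

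For statement 2., fix $i$ and let $Z$ be the closure of ${\cal P}^{e_i}_\varepsilon$ in ${\cal M}^\st$. It is irreducible, of dimension $\geq2$ (it contains ${\cal P}^{e_i}_\varepsilon$) and $<3$ (a $2$-dimensional set is nowhere dense in the $3$-fold ${\cal M}^\st$); it is analytic — hence of pure dimension $2$ and equal to the Zariski closure — by Bishop's theorem away from the reductions and by Remmert--Stein near each reduction circle (via Theorem \ref{hol-model}, as in the proof of Theorem \ref{partialZ}(1)), using compactness of ${\cal M}^\pst$; and $Z\cap{\cal P}^0_\varepsilon=\emptyset$ because ${\cal P}^{e_i}_\varepsilon\cap{\cal P}^0_\varepsilon=\emptyset$ and ${\cal P}^0_\varepsilon$ is open. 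I then compute the four boundaries: since $r''_{e_\emptyset}=0$, Theorem \ref{hol-model} makes ${\cal O}_{\lambda_\emptyset}\setminus{\cal P}_{\lambda_\emptyset}$ a subset of ${\cal P}^0_\varepsilon$, so $\delta_{\lambda_\emptyset}[Z]^{BM}=0$; near $C_{\lambda_{\{i\}}}$, Theorem \ref{hol-model}(3) identifies ${\cal P}^{e_i}_\varepsilon$ with $P'_{c,-}$ ($c=e_i$, $r'=2$), closed in $Q^s_{f_c}$, so $Z$ coincides there with $P'_{c,-}$ and Remark \ref{P'P''} gives $\delta_{\lambda_{\{i\}}}[Z]^{BM}=-[C_{\lambda_{\{i\}}}]\otimes w_1$; and for each remaining index $j$ the trace $Z_j:=Z\cap Q^s_{f_{c_j}}$ ($c_j=e_j$, $r'=2$, $r''=1$) is empty or an irreducible hypersurface $\neq P'_{c_j,-}$ (equality would force $Z=\overline{{\cal P}^{e_j}_\varepsilon}$, making ${\cal P}^{e_i}_\varepsilon$ and ${\cal P}^{e_j}_\varepsilon$ disjoint dense open subsets of the irreducible $Z$), so Example \ref{ex3} applies and gives $\delta_{\lambda_{\{j\}}}[Z]^{BM}=m_j[C_{\lambda_{\{j\}}}]\otimes w_1$ with $m_j:=\deg(Z_j\cap P'_{c_j,x})\in\Z_{\geq0}$. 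Feeding $[Z]^{BM}\in H_4^{BM}({\cal M}^\st,\Q)$ and $\nu=\nu(h)\in H^3({\cal B}^*,\Q)$ into Proposition \ref{identity} and using $\nu(h)|_{{\cal P}_\lambda}=\delta(h)\cup W_c$, the contributions $0,-1,m_{j_1},m_{j_2}$ of the four circles give $m_{j_1}+m_{j_2}=1$, so exactly one of them — say $m_{j_1}$ — equals $1$. By Example \ref{ex3}, case 2.(b), $Z_{j_1}$ then contains $P''_{c_{j_1},+}$, which Theorem \ref{hol-model}(3) identifies with ${\cal P}^{c_1(E)-e_{j_1}}_\varepsilon={\cal P}^{e_I}_\varepsilon$ for $I=\Ig\setminus\{j_1\}$; by Remark \ref{RemIntro} ($r_{e_I}=|\bar I|=1$) this is a curve, and $Z\supset{\cal P}^{e_I}_\varepsilon$ with $|I|=2$, as required.

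The main obstacle is the complex-analytic input underlying statement 2.: establishing that the topological closures of the extension families ${\cal P}^{e_i}_\varepsilon$ are honest analytic sets of the expected dimension in this non-Kählerian framework, and pinning down their behaviour near each reduction circle — i.e. identifying $Z_j$ with one of the standard pieces $P'_\pm$, $P''_\pm$ of the flip passage via the holomorphic model theorem — so that Example \ref{ex3} becomes applicable; once this is done, Proposition \ref{identity} reduces the incidence question to the single numerical equality $m_{j_1}+m_{j_2}=1$. A secondary, purely bookkeeping, difficulty is to keep all sign conventions (the boundary orientation of $C_\lambda$, the factors $(-1)^{r''-1}$ in Corollary \ref{partialQ} and Remark \ref{P'P''}, and the sign $(-1)^l$ in the join morphism of Remark \ref{NewJoinOnGen}) consistent, since the conclusion is read off from the sign of a single scalar.
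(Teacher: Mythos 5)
Your treatment of statement 1 is essentially the paper's argument (same test classes $\nu(h)\cup\nu(u)$, same boundary computation via Corollary \ref{partialQ}, same combinatorial conclusion), and the overall strategy for the incidence claim in statement 2 (feed $\nu(h)$ and the boundary formulae of Example \ref{ex3} into Proposition \ref{identity}) is also the paper's. The genuine gap is in how you produce the object $Z$ to which this machinery is applied. You take $Z$ to be the \emph{topological} closure of ${\cal P}^{e_i}_\varepsilon$ in ${\cal M}^\st$ and assert it is analytic ``by Bishop's theorem away from the reductions and by Remmert--Stein near each reduction circle''. Neither theorem applies: Bishop requires locally finite $2k$-volume (or a limit of analytic sets with uniformly bounded volumes), which you have not established for the family of extensions as $\deg_g({\cal L})\to\kg-\varepsilon$, and Remmert--Stein extends an analytic set across a \emph{lower-dimensional analytic} subset, whereas here the limit points of ${\cal P}^{e_i}_\varepsilon$ are a priori an arbitrary closed subset of the compact ${\cal M}^\pst$ with no ambient analytic structure to extend across (this is exactly the difficulty the paper stresses: the closure of an extension family is in general inaccessible). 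The paper avoids this entirely: it works with the Zariski closure (which exists tautologically) and bounds its dimension by a nontrivial complex-geometric input proved elsewhere, namely that ${\cal P}^0_\infty=\bigcup_{y\in\Pic^0(X)_{(-\infty,\kg)}}\P_y$ is a \emph{Zariski open} subset of ${\cal M}^\st_0$ disjoint from ${\cal P}^{e_i}_\varepsilon$ (this is where minimality and the non-Enoki hypothesis are used, via \cite{Te5}, \cite{Te6}); statement 1 then puts ${\cal P}^{e_i}_\varepsilon$ inside the nowhere dense Zariski-closed complement, giving pure dimension $2$ and $Z\cap{\cal P}^0_\varepsilon=\emptyset$. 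Your proposal has no substitute for this input, so the foundation of your statement 2 is missing.

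A secondary problem is your local analysis at the reduction circles. There is no reason for the trace $Z\cap{\cal O}_{\lambda_k}$ to be irreducible, nor for $Z$ to \emph{coincide} with $P'_{e_i,-}$ near $C_{\lambda_i}$ (other sheets of $Z$ may enter that neighborhood); and your exclusion of $Z_j=P'_{c_j,-}$ for $j\ne i$ rests on ${\cal P}^{e_i}_\varepsilon$ being dense in $Z$ (true only for the topological closure, i.e. only under the unjustified analyticity claim) together with the unproved disjointness ${\cal P}^{e_i}_\varepsilon\cap{\cal P}^{e_j}_\varepsilon=\emptyset$. The paper's bookkeeping is both weaker and sufficient: it sums over \emph{all} irreducible components $Y$ of $Z\cap{\cal O}_{\lambda_k}$, with the convention that a component equal to $P'_{e_k,-}$ contributes $-1$ and every other component contributes $\deg(\Psi_{e_k}^{-1}(Y)\cap P'_{e_k,z_k})\geq 0$; since the total vanishes and the component ${\cal P}^{e_i}_\varepsilon\cap{\cal O}_{\lambda_i}$ contributes $-1$, some component has positive degree and hence contains $P''_{e_k,+}={\cal P}^{e_{\Ig\setminus\{k\}}}_\varepsilon$. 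If you rewrite your counting in this component-wise form, your numerical identity $m_{j_1}+m_{j_2}=1$ becomes unnecessary and the unjustified irreducibility and disjointness claims disappear; but the analyticity/dimension step still needs the ${\cal P}^0_\infty$ argument.
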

\begin{proof}  Put $\lambda_0:=\{0,e_\Ig\}$, and, for $k\in\Ig$, put $\lambda_k:=\{e_k,e_{\Ig\setminus\{k\}}\}$. One has $r'_0=3$, $r''_0=0$, $r'_{e_k}=2$, $r''_{e_k}=1$. \\
\\
 1. We apply Proposition \ref{identity} to the analytic set ${\cal M}_0^\st:={\cal M}_0\cap {\cal M}^\st$ taking $\nu:=\mu(h)\cup \mu (u)$ for classes   $h\in H_1(X,\Z)$, $u\in H_2(X,\Z)$. By Corollary \ref{partialQ} we have
$$\delta[\Psi_0^{-1}({\cal M}_0^\st)]=-[C_0]\otimes [\P(E'_{0,y_0})]\ ,\ \delta[\Psi_{e_k}^{-1}({\cal M}_0^\st)]=a_k[C_{e_k}]\otimes [\P(E'_{e_k,y_k}\oplus \bar E''_{e_k,y_k})]\ ,
$$
where 
$a_k:=\left\{\begin{array}{ccc}
1 &\rm if & C_{\lambda_k}\subset {\cal M}_0\\
0 & \rm if & C_{\lambda_k}\not \subset {\cal M}_0
\end{array}\right.$, $y_0\in C_0$ and $y_k\in C_{e_k}$.
Using Theorem \ref{hol-model}, and formula (\ref{gamma}) we obtain
$$\langle\gamma_X,h\rangle\big\{\big\langle e_\Jg, u\big\rangle +\sum_k a_k \big\langle e_k -e_{\Jg\setminus\{k\}}, u\big\rangle\big\}=0\ \forall u\in H_2(X,\Z)\ ,\ \forall h\in H_1(X,\Z)\ ,
$$
Therefore $ e_\Jg   +\sum_k a_k  ( e_k -e_{\Jg\setminus\{k\}})=0$, which holds only when $a_k=1$ for all $k\in\Jg$.\vspace{1mm}\\
2. Using the fact that $X$ is minimal one can prove as in \cite{Te5} that $\P_y\subset {\cal M}^\st$ is a projective plane for any $y\in \Pic^0(X)_{(-\infty,\kg)}$, the union 
$${\cal P}^0_\infty:=\union_{y\in \Pic^0(X)_{(-\infty,\kg)}}\P_y$$
 is disjoint, and gives a {\it Zariski} open subset of ${\cal M}^\st_0$ (see \cite{Te6} for details). ${\cal P}^0_\infty$ is a $\P^2_\C$-bundle over the punctured disk $\Pic^0(X)_{(-\infty,\kg)}$ via the natural projection. One can also check easily that ${\cal P}^{e_i}_\varepsilon\cap   {\cal P}^0_\infty=\emptyset$. On the other hand, taking into account 1., we get ${\cal P}^{e_i}\subset {\cal M}^\st_0$. This shows that ${\cal P}^{e_i}$ is contained in the complement ${\cal M}^\st_0\setminus {\cal P}^0_\infty$, which is Zariski closed and nowhere dense. This proves that the Zariski closure $Z$ of ${\cal P}^{e_i}_\varepsilon$ in ${\cal M}^\st$ has pure dimension 2, and does not intersect ${\cal P}^0_\varepsilon$. For the last claim,  let ${\cal I}_k$ be the set of irreducible components of $Z\cap {\cal O}_{\lambda_k}$ (as hypersurface of ${\cal O}_{\lambda_k}$). 
Taking $\nu:= \mu(h)$ in Proposition \ref{identity}, and using the formulae explained in Example \ref{ex3}, we get
\begin{equation}\label{sum}
0=\langle \gamma_X,h\rangle\sum_{k\in\Ig} \sum_{Y\in {\cal I}_k} \deg(\Psi_{e_k}^{-1}(Y)\cap P'_{e_k,z_k})\ ,
\end{equation}
where $z_k\in B_{e_k,-}$, and we agree to write $\deg(\Psi_{e_k}^{-1}(Y)\cap P'_{e_k,z_k})=-1$ if $\Psi_{e_k}^{-1}(Y)=P'_{e_k,-}$.
Supposing that ${\cal O}_{\lambda_i}$ is sufficiently small, we'll have ${\cal P}^{e_i}_\varepsilon\cap {\cal O}_{\lambda_i}\in {\cal I}_i$. This intersection corresponds  to $P'_{e_i,-}$ via $\Psi_{e_i}$. Therefore at least a term on the right  in (\ref{sum}) is -1, hence there exists $k\in\Jg$ and $Y\in {\cal I}_k$ such that $\deg(\Psi_{e_k}^{-1}(Y)\cap P'_{e_k,x}))>0$. As explained in Example \ref{ex3}, this implies $P''_{e_k,+}\subset \Psi_{e_k}^{-1}(Y)$, i.e., ${\cal P}^{e_{\Ig\setminus\{k\}}}_\varepsilon\subset Y$.
\end{proof}

\end{document}